    \newtheorem{thm}{Theorem}[section]
    \newtheorem{theorem}[thm]{Theorem}
    \newtheorem{lemma}[thm]{Lemma}
    \newtheorem{corollary}[thm]{Corollary}
    \newtheorem{proposition}[thm]{Proposition}
    \theoremstyle{definition}
    \newtheorem{definition}[thm]{Definition}
    \newtheorem{remark}[thm]{Remark}
    \newtheorem{example}[thm]{Example}
    \newtheorem{notation}[thm]{Notation}
    \newenvironment{customthm}[1]
    {\innercustomthm}
    {\endinnercustomthm}
    \newtheorem{nono-theorem}{Theorem}[]
    \newtheorem{nono-coro}{Corollary}[]
    \newcommand{\polyFromBend}[1]{\mathcal{I}_{#1}}
    \def\closed{\rm{closed}}
    \def\iclosed{\it{closed}}
    \def\cocoa{{\hbox{\rm C\kern-.13em o\kern-.07em C\kern-.13em o\kern-.15em A}}}
    \def\mz{{\mathbb{Z}}}
    \def\mn{{\mathbb{N}}}
    \def\mr{{\mathbb{R}}}
    \def\RR{{\mathbb{R}}}
    \def\mb{{\mathbb{B}}}
    \def\b0{{\pmb 0}}
    \def\vecx{{\pmb x}}
    \def\vecu{{\pmb u}}
    \def\rma{{\mathbb{T}}}
    \def\TT{{\mathbb{T}}}
    \def\VV{\mathbb{V}}
    \def\VC{\mathbb{V}(C)}
    \def\II{\mathbb{I}}
    \def\tPoly{{\mathbb{T}[x_1, \dots, x_n]}}
    \def\tLaur{{\mathbb{T}[x_1^{\pm 1}, \dots, x_n^{\pm 1}]}}
    \def\<{\langle}
    \def\>{\rangle}
\begin{document}

    %\title{Varieties of closed prime ideals}

    \title{Varieties of prime tropical ideals and the dimension of the coordinate semiring}

    \author{D\'aniel Jo\'o}
    \address{Mollia, Budapest, Hungary}
    \email{jood.mollia@gmail.com}

    \author{Kalina Mincheva}

%    Address of record for the research reported here
\address{Department of Mathematics, Tulane University, New Orleans, LA 70115}
\email{kmincheva@tulane.edu}

%    General info
\subjclass[2010]{14T10 (Primary); 16Y60 (Primary); 12K10 (Secondary)}

\keywords{tropical variety, tropical ideal, idempotent semiring, idempotent semifield, Krull dimension, congruence}

\begin{abstract}
In this note we study the relationship between ideals and congruences of the tropical polynomial and Laurent polynomial semirings. We show that the variety of a non-zero prime ideal of the tropical (Laurent) polynomial semiring consists of at most one point. We also prove a result relating the dimension of an affine tropical variety and the dimension of its ``coordinate semiring''.
\end{abstract}

\maketitle

\section{Introduction}
Tropical geometry studies algebraic varieties through their combinatorial shadows -- tropical varieties (defined by tropical polynomials) -- thus allowing for the use of a broader range of (combinatorial) tools. Some applications of tropical methods include computing Gromov-Witten invariants due to Mikhalkin in \cite{Mik06}, tropical proof of the Brill-Noether Theorem \cite{BrillNoether}, Brill-Noether theory for curves of a fixed gonality \cite{JR17}, developing a strategy to attack the Riemann hypothesis \cite{CC18}, the Gross-Siebert program in mirror symmetry \cite{Gro11}, and studying toric degenerations \cite{KM16}.\par

In tropical geometry most algebraic computations are done on the classical side by using the algebra of the original variety. The theory developed so far has explored the geometric aspect of tropical varieties as opposed to the underlying (semiring) algebra. There are still many commutative algebra tools and notions without a tropical analogue. Understanding the underlying algebraic structures can complement the existing tropical methods. For instance, the algebraic data associated to a tropical variety is often the only way to determine if a geometric object with the combinatorial properties of a tropical variety arises from an algebraic variety. The fact that this is not always the case can be a major obstruction to using tropical methods to solve enumerative problems. 

In the recent years, there has been a lot of effort dedicated to developing the necessary tools for commutative algebra using different frameworks. Among these are the arithmetic and scaling cite \cite{CC14}, \cite{CC16}, blueprints \cite{Lor15},  tropical ideals \cite{MR14}, \cite{MR16}, tropical schemes \cite{GG13}, super-tropical algebra \cite{IR10}, systems \cite{R18}, and prime congruences \cite{JM14}. These approaches aim for the exploration of the properties of tropicalized spaces without tying them up to the original varieties and working with geometric structures inherently defined in characteristic one, that is, over additively idempotent semifields. We are mostly interested in the case when the underlying semifield is the tropical semifield $\rma$. It is the set $\{ \mathbb{R} \cup \{-\infty\}\}$ with two operations: maximum playing the role of addition and addition acting as multiplication. 

In the polynomial semiring and the Laurent polynomial semiring over $\TT$, there is no bijection between ideals and congruences, equivalence relations that respect the operations. For this reason we often need to work with both ideals and congruences. To an ideal, in fact to any set, one can associate a congruence relation, having geometric meaning, called a bend congruence. These congruences were originally introduced in \cite{GG13}, see Definition~\ref{def:bend_rel}. Bend congruences are generated by the so called bend relations of all polynomials in the ideal (or set). Conversely, to a congruence $C$, one can associate the ideal $\polyFromBend{C}$ of all polynomials, whose bend relations are contained in the congruence. 
If $\polyFromBend{Bend(I)} = I$ then we say that the ideal $I$ is $\iclosed$. 
%The ideals that satisfy the property $\polyFromBend{Bend(I)} = I$ we call \emph{closed}. 
Examples of closed ideals are the \emph{tropical ideals} as shown in \cite{MR14}. Tropical ideals are arguably the most important class of ideals as they fully capture the algebra of tropical varieties. However, tropical ideals often cannot be described by a finite amount of data which makes working with them hard.

%Tropical ideals are ideals of the polynomial semiring with coefficients in $\rma$, which satisfy certain combinatorial conditions. More precisely, the polynomials of degree at most $d$ in a tropical ideal can be viewed as the vectors of a valuated matroid. An equivalent description is given in Definition~\ref{def: trop_ideals}. Tropical ideals fully capture the algebra of tropical varieties, which are realized as the vanishing locus of an ideal of tropical polynomials. We refer to ideals as realizable tropical ideals whenever they arise as the image of classical ideals. However, not every tropical ideal comes as the tropicalization of some classical object.

%In this note we give some results on prime ideals in the tropical polynomial and Laurent polynomial semirings and their varieties.
In our first two results we classify prime tropical ideals and the varieties of all prime ideals. We use the relation between ideals and the associated bend congruences, and the theory of prime and geometric prime congruences developed in our previous work \cite{JM14}. A \textit{prime} congruence $C$ on $\tPoly$ (resp. $\tLaur$) is an equivalence relation that respects the operations and the quotient $\tPoly/C$ (resp. $\tLaur/C$) is a totally ordered cancellative semiring. A prime congruence is a \textit{geometric prime} if this quotient is isomorphic to $\TT$.
% \begin{customthm}{A}
% The ideal $\polyFromBend{P} \subseteq \tLaur$ is prime if and only if $P$ is a prime congruence of $\tLaur$.
% \end{customthm}

\begin{customthm}{A}
Every closed prime ideal in $\tLaur$ is of the form $\polyFromBend{P}$ for a prime congruence $P$. $\polyFromBend{P}$ is the zero ideal if and only if $P$ is a minimal prime. The maps $I \rightarrow Bend(I)$ and $P \rightarrow \polyFromBend{P}$ establish a one to one correspondence between the non-zero closed prime ideals and the non-minimal prime congruences of $\tLaur$.
The closed prime ideal $\polyFromBend{P}$ is a tropical ideal if and only if $P$ is a geometric prime congruence. 
\end{customthm}

If $I$ is an ideal of $\tLaur$ (resp. $\tPoly$) we define the variety $\VV(I)$ of $I$ as the set of all points in $\RR^n$ (resp. $\TT^n$) where every polynomial $f\in I$ tropically vanishes, i.e., its maximum as a function is attained at two or more different terms.

\begin{customthm}{B}
If $I$ is a non-zero prime ideal of $\tLaur$ or $\tPoly$ then $\VV(I)$ is at most a point. In particular, the varieties of prime tropical ideals are just points as only tropical prime ideals are the tropicalizations of maximal ideals. 
\end{customthm}

We also relate the dimension of an affine tropical variety (of a tropical ideal) and the dimension of its ``coordinate ring''. The results can be summarized as follows:

\begin{customthm}{C}\footnote{This result for realizable tropical ideals was originally announced in the second author's PhD thesis.}
Let $I$ be a tropical ideal in $\tPoly$ such that $\VV(I)$ has dimension $d$ as a polyhedral complex. Then 
$$\dim \tPoly/Bend(I) = d + 1,$$
where $\dim \tPoly/Bend(I)$ is the number of strict inclusions in the longest chain of prime congruences. 
\end{customthm}

Intuitively this result states that the (Krull) dimension of the tropical ``coordinate ring" is the same as the geometric dimension, corrected for the dimension of $\mathbb{T}$, which is 1. Theorem B can be seen as an algebraic way to compute the dimension of the tropicalization of an affine variety.

%\orange{Theorem B can be seen as an algebraic way to compute the dimension of the tropicalization of an affine variety. Let $K$ be a field with valuation and let $I$ an ideal of $K[x_1, \dots, x_n]$ and let $trop(I) = \{ trop(f) \ : \ f\in I \}$, where $trop(f)$ is the polynomial in $\tPoly$ obtained from $f$ by valuating all coefficients of $f$ and replacing the operations multiplication and addition with their tropical counterparts. Then $$\dim trop( V(I)) = \dim \tPoly/Bend(trop(I))-1.$$ As before, $\dim \tPoly/Bend(\nu(I))$ is the number of strict inclusions in the longest chain of prime congruences.}

\addtocontents{toc}{\protect\setcounter{tocdepth}{-1}}
\section*{Acknowledgements}

The authors thank Diane Maclagan and Nati Friedenberg for their insightful comments on earlier versions of this paper. K.M. acknowledges the support of the Simons Foundation, Travel Support for Mathematicians.

\addtocontents{toc}{\protect\setcounter{tocdepth}{2}}

\section{Preliminaries}

In this paper by a {\it semiring} we mean a commutative semiring with multiplicative unit, that is a nonempty set $R$ with two binary operations $(+,\cdot)$ satisfying:

\begin{itemize}
\item $(R,+)$ is a commutative monoid with identity element $0_R$
\item $(R,\cdot)$ is a commutative monoid with identity element $1_R$
\item For any $a,b,c \in R$: $a(b+c) = ab+ac$
\item $1_R \neq 0_R$ and $a\cdot 0_R = 0_R$ for all $a \in R$
\end{itemize}

A {\it semifield} is a semiring in which all nonzero elements have multiplicative inverse. We will denote by $\mb$ the semifield with two elements $\{1,0\}$, where $1$ is the multiplicative identity, $0$ is the additive identity and $1+1 = 1$. The {\it tropical semifield}  $\rma$ is defined on the set $\mr  \cup \{-\infty\} $, by setting the $+$ operation to be the usual maximum and the $\cdot$ operation to be the usual addition, with $-\infty = 0_\rma$.
By a {\it $\mb$-algebra} $A$ we simply mean a commutative semiring with idempotent addition (that is $a + a = a, \forall a \in A$). Note that the idempotent addition defines an ordering via $a \geq b \iff a+b = b$. We will call a $\mb$-algebra {\it totally ordered} if its addition induces a total ordering.
%In this note we denote $\mb$-algebras by $A$ and semirings by $R$.
%% Note to self: we denote with R semirings and with A B-algebras, just to emphasize 

\par\smallskip

A \textit{polynomial (resp. Laurent polynomial)} semiring with variables $x_1,\dots,x_n$ over a semifield $F$ is denoted by $F[x_1,\dots,x_n]$ (resp. $F[x_1^{\pm 1},\dots,x_n^{\pm 1}]$). Its elements are formal linear combinations of the monomials $\{x_1^{k_1}...x_k^{k_n}\mid\;k_i \in \mn\}$ (resp. $\{x_1^{k_1}...x_n^{k_n}\mid\;k_i \in \mz\}$) with coefficients in $F$, with addition and multiplication being defined in the usual way. By a \textit{term} in $F[x_1,\dots,x_n]$ (resp. $F[x_1^{\pm 1},\dots,x_n^{\pm 1}]$) we will mean a monomial with a coefficient.\par\smallskip
As usual, an {\it ideal} of a semiring $R$ in additive submonoid that is closed under multiplication by any element of $R$.
%the semiring $R$ is just a subsemiring that is closed under multiplication by any element of $R$. Congruences of semirings are just operation preserving equivalence relations.

\begin{definition}
A {\it congruence} $C$ of the semiring $R$ is a subset of $R \times R$, such that it is an equivalence relation that respects the operations on $R$. In other words:
\begin{itemize}
\item For $a \in R$, $(a,a) \in C$
\item $(a,b) \in C$ if and only if $(b,a) \in C$
\item If $(a,b) \in C$ and $(b,c) \in C$ then $(a,c) \in C$
\item If $(a,b) \in C$ and $(c,d) \in C$ then $(a+c,b+d) \in C$
\item If $(a,b) \in C$ and $(c,d) \in C$ then $(ac,bd) \in C$
\end{itemize}
\end{definition}

The unique smallest congruence is the diagonal of $R \times R$, also called the {\it trivial congruence}. % and denoted by $\diag$. 
It often plays the role of the zero ideal. $R \times R$ itself is the {\it improper congruence} the rest of the congruences are called {\it proper}. Elements of $R \times R$ are called {\it pairs}. 

\begin{definition}[adapted from Definition 5.1.1 in \cite{GG13}]\label{def:bend_rel}{\rm
Let $A$ be a $\mb$-algebra and $f \in R[x_1, \ldots, x_n]$. For $i$ in the support of $f$ denoted by $supp(f)$, we write $f_{\hat \imath}$ for the result of deleting the $i$ term from $f$. Then the {\it bend relations of $f$} is the set of pairs 
$$bend(f) = \{(f , f_{\hat \imath})\}_{i\in supp(f)}.$$

The {\it bend congruence} is the congruence generated by the bend relations of a set $I$ of polynomials in $A[x_1, \ldots, x_n]$, denoted $Bend(I)$. We will be mostly interested in the case when $I$ is an ideal.
}
\end{definition}

%We denote pairs by Greek letters, and denote the coordinates of the pair $\alpha$ by $\alpha_1,\alpha_2$. 
\noindent Let $R$ be a semiring. The {\it twisted product} of the pairs $\alpha = (\alpha_1, \alpha_2)$ and $\beta = (\beta_1, \beta_2)$ of $R \times R$ is $$\alpha\beta=(\alpha_1\beta_1+\alpha_2\beta_2,\alpha_1\beta_2+\alpha_2\beta_1).$$ Note that the twisted product is associative. The set of pairs is a monoid under this operation, with the pair $(1,0)$ being the identity element. For the rest of the paper in any formula containing pairs the product is always the twisted product, so the twisted product of $\alpha$ and $\beta$ is simply denoted by $\alpha\beta$. Similarly $\alpha^n$ denotes the twisted $n$-th power of the pair $\alpha$, and we use the convention $\alpha^0 = (1,0)$. %The product of two congruences $C$ and $E$ is defined as the congruence generated by the set $\{\alpha\beta\mid \alpha \in C,\  \beta\in E\}$. 
For an element $a$ and a pair $\alpha$ we define their product as $a(\alpha_1,\alpha_2)=(a\alpha_1,a\alpha_2)$ which is the same as the twisted product $(a,0)\alpha$.\\

We recall some definitions and results from \cite{JM14}.

\begin{proposition}[Proposition 2.2 in \cite{JM14}]\label{prop: closedTwProd}
Let $C$ be a congruence of a $\mb$-algebra $A$, for $\alpha \in C$ and an arbitrary pair $\beta$ we have $\alpha\beta \in C$.
\end{proposition}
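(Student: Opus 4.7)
The plan is to build the pair $\alpha\beta = (\alpha_1\beta_1+\alpha_2\beta_2,\ \alpha_1\beta_2+\alpha_2\beta_1)$ out of $\alpha = (\alpha_1,\alpha_2)$ by applying only the axioms of a congruence: reflexivity, symmetry, and compatibility with addition and multiplication. The key observation is that although the twisted product mixes coordinates in an ``asymmetric'' way, this asymmetry can be recovered by swapping the two entries of $\alpha$, which is legal because congruences are symmetric.

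First I would use multiplicative compatibility. Since $(\beta_1,\beta_1)$ lies in $C$ by reflexivity, multiplying the pair $\alpha = (\alpha_1,\alpha_2) \in C$ by it produces $(\alpha_1\beta_1,\ \alpha_2\beta_1) \in C$. Next, using symmetry, the pair $(\alpha_2,\alpha_1)$ is also in $C$; multiplying it by $(\beta_2,\beta_2) \in C$ yields $(\alpha_2\beta_2,\ \alpha_1\beta_2) \in C$.

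Finally I would invoke additive compatibility: adding the two pairs produced above gives
\[
(\alpha_1\beta_1 + \alpha_2\beta_2,\ \alpha_2\beta_1 + \alpha_1\beta_2) \in C,
\]
which is exactly $\alpha\beta$ after matching with the definition of the twisted product. This completes the argument.

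There is essentially no hard step here; the only subtlety is recognizing that the asymmetric appearance of the twisted product (it is not simply the coordinatewise product) is handled by the symmetry axiom of $C$, which lets one invert the roles of $\alpha_1$ and $\alpha_2$ before performing the second multiplication. Once that is noticed, the result reduces to a single use each of reflexivity, symmetry, multiplicative compatibility, and additive compatibility.
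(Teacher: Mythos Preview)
Your proof is correct. The paper itself does not supply a proof of this proposition---it merely quotes it as Proposition~2.2 of \cite{JM14}---so there is no in-paper argument to compare against; the derivation you give, using reflexivity to obtain $(\beta_i,\beta_i)\in C$, symmetry to swap the entries of $\alpha$, and then one application each of multiplicative and additive compatibility, is exactly the standard elementary verification.
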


\begin{definition}\label{def:qc-p-r-prime}
A congruence $C$ on a $\mb$-algebra $R$ is called
\begin{itemize}
    \item {\it quotient cancellative} if $R/C$ is a cancellative semiring, namely if $ac = bc \in R/C$ then either $a=b$ or $c = 0_{R/C}$.
    \item {\it prime} if it is proper and for every $\alpha, \beta \in R \times R$ such that $\alpha\beta \in C$ either $\alpha \in C$ or $\beta \in C$. %Equivalently, by Theorem 2.13 in \cite{JM14} if $R$ is idempotent, $C$ is prime if it is QC and irreducible, that is if it can not be obtained as the intersection of two strictly larger congruences.
    \item {\it radical} if $C$ is equal to the intersection of all prime congruences containing it, which we denote by $Rad(C)$.
\end{itemize}
\end{definition}

\begin{proposition}[Proposition 2.10 in \cite{JM14}]\label{prop: 2.10}
Let $A$ be a $\mb$-algebra. If $A$ is totally ordered then the trivial congruence of $A$ is prime if and only if $A$ is cancellative. 
\end{proposition}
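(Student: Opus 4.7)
The plan is to prove the two implications separately; only the reverse direction uses the total order.

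For the forward direction (prime $\Rightarrow$ cancellative), I would exploit the asymmetry of the twisted product. Given $ac = bc$ in $A$, set $\alpha = (a,b)$ and $\beta = (c,0)$; a direct computation gives $\alpha\beta = (ac + b\cdot 0,\, a\cdot 0 + bc) = (ac, bc) \in \Delta$, where $\Delta$ denotes the trivial (diagonal) congruence. Primality then forces $\alpha \in \Delta$ or $\beta \in \Delta$, i.e., $a = b$ or $c = 0$, establishing cancellativity. Note this half does not use the total order.

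For the reverse direction, $\Delta$ is proper since $0_A \neq 1_A$ in any $\mb$-algebra. Now suppose $\alpha\beta \in \Delta$, so $\alpha_1\beta_1 + \alpha_2\beta_2 = \alpha_1\beta_2 + \alpha_2\beta_1$. By swapping coordinates within $\alpha$ or within $\beta$ if needed (each such swap preserves membership of $\alpha$, $\beta$, and $\alpha\beta$ in $\Delta$), I may use the total order to assume $\alpha_1 + \alpha_2 = \alpha_1$ and $\beta_1 + \beta_2 = \beta_1$. Multiplying these relations by $\beta_j$ and $\alpha_i$ respectively and combining shows that $\alpha_1\beta_1$ absorbs each of the other three products, so the left-hand side collapses to $\alpha_1\beta_1$ and the equation becomes $\alpha_1\beta_1 = \alpha_1\beta_2 + \alpha_2\beta_1$. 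Total order implies this sum equals one of its summands, so either $\alpha_1\beta_1 = \alpha_1\beta_2$ or $\alpha_1\beta_1 = \alpha_2\beta_1$.

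Cancellativity now finishes: from $\alpha_1\beta_1 = \alpha_1\beta_2$, one gets $\beta_1 = \beta_2$ (so $\beta \in \Delta$) unless $\alpha_1 = 0$; in that degenerate case the assumption $\alpha_1 + \alpha_2 = \alpha_1$ reads $0 + \alpha_2 = 0$, which together with $0 + \alpha_2 = \alpha_2$ forces $\alpha_2 = 0$ and hence $\alpha \in \Delta$. The other case is symmetric. The only mild subtlety is tracking these degenerate $0$-coordinate subcases; they are dispatched cleanly using the identity $0 + a = a$.
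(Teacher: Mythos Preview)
The paper does not actually give a proof of this proposition; it is quoted verbatim from \cite{JM14} as a preliminary result, so there is no in-paper argument to compare against. That said, your argument is correct and is essentially the standard one: the forward direction via the factorization $(a,b)(c,0)=(ac,bc)$ is exactly how one shows that primes are quotient cancellative in \cite{JM14}, and your reverse direction---normalizing so that $\alpha_1$ and $\beta_1$ are the dominant coordinates, collapsing the left side to $\alpha_1\beta_1$, then using that a sum in a totally ordered $\mb$-algebra equals one of its summands and cancelling---is clean. The degenerate case $\alpha_1=0$ is handled correctly: $0+\alpha_2=\alpha_2$ together with the normalization $\alpha_1+\alpha_2=\alpha_1=0$ forces $\alpha_2=0$. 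One cosmetic remark: the paper's stated convention is $a\geq b\iff a+b=b$, so your phrase ``$\alpha_1\beta_1$ absorbs each of the other three products'' corresponds to $\alpha_1\beta_1$ being the \emph{minimum} in that convention; this does not affect the argument, since all you use is that $a+b\in\{a,b\}$ in a totally ordered $\mb$-algebra.
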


We will often refer to the following immediate consequences of the above:
\begin{corollary}\label{cor: prop_prime}
Let $A$ be a $\mb$-algebra and $C$ a congruence on $A$. 
\begin{enumerate}
    \item The congruence $C$ is prime if and only if $A/C$ is cancellative and totally ordered.
    \item In the special case when every element of $A$ is a sum of invertible elements, $C$ is prime if and only if $A/C$ is totally ordered. 
    \item Each prime congruence of $A$ is generated by pairs of the form $(a, a+b),$ for $a, b \in A$. 
\end{enumerate}
\end{corollary}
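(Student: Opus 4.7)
The plan is to reduce all three parts to Proposition~\ref{prop: 2.10} applied to $A/C$, exploiting the standard fact that $C$ is prime on $A$ precisely when the trivial (diagonal) congruence on $A/C$ is prime, which is immediate by pushing pairs forward and pulling them back along the quotient map.

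For part~(1), one direction is free: if $A/C$ is cancellative and totally ordered, Proposition~\ref{prop: 2.10} says its diagonal is prime, so $C$ is prime. For the converse I would assume the diagonal of $A/C$ is prime and extract both properties from carefully chosen twisted products. For cancellativity, if $ac = bc$ in $A/C$ with $c \neq 0$, then the twisted product $(a,b)(c,0) = (ac, bc)$ is diagonal, so primeness forces $a = b$ or $c = 0$. For total order, I would compute
$$
(a+b,\, a)(a+b,\, b) = \bigl((a+b)^2 + ab,\; (a+b)b + a(a+b)\bigr),
$$
and observe that idempotent addition collapses both coordinates to $a^2 + ab + b^2$. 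Hence this product is diagonal, and primeness yields $a+b = a$ or $a+b = b$, i.e. $b \leq a$ or $a \leq b$. Locating these two auxiliary twisted products is essentially the only nontrivial step.

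For part~(2), given part~(1) only the implication ``$A/C$ totally ordered $\Rightarrow$ $A/C$ cancellative'' remains. In a totally ordered $\mb$-algebra, addition coincides with taking the maximum, so any finite sum collapses to its largest summand. Writing an arbitrary element of $A$ as a sum of invertibles and passing to $A/C$, its image equals the image of a single invertible element, which is still invertible in $A/C$. Thus every nonzero element of $A/C$ is invertible, and cancellativity is immediate.

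For part~(3), given any pair $(x, y) \in C$, adding the diagonal pair $(x, x) \in C$ and using $x+x = x$ yields $(x, x+y) \in C$, and symmetrically $(y, x+y) \in C$. Both are of the advertised form $(a, a+b)$, and transitivity recovers $(x,y)$ from them. So the pairs of that shape lying in $C$ already generate $C$; in fact this argument does not even use primeness. Overall, the main obstacle is just guessing the right twisted products in part~(1); everything else is routine manipulation with the twisted product and the congruence/quotient correspondence.
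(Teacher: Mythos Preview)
Your proof is correct and follows essentially the same approach as the paper: the key twisted product $(a+b,a)(a+b,b)$ landing in the diagonal is exactly the computation the paper uses for total order, and your treatment of part~(2) is the same argument. Your part~(3) is slightly more elementary than the paper's (which invokes part~(1) and the order on $A/C$), and as you observe it works for any congruence, not just primes; this is a mild improvement but not a different route.
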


\begin{proof}
Applying Proposition~\ref{prop: 2.10}, it remains to be seen that the quotient of a prime congruence $P$ is totally ordered. Indeed for arbitrary elements $a,b \in A$, the twisted product $$(a + b, a)(a + b, b) = (a^2 + b^2 + ab, a^2 + b^2 + ab),$$ lies in the diagonal, hence every prime contains at least one of $(a + b, a)$ and $(a + b, b)$, implying that in the quotient we have $b \geq a$ or $a \geq b$. 

For part (2), consider that if every element of $A$ is sum of invertibles, then the same holds in the quotient $A/C$. Moreover since $A/C$ is totally ordered every sum equals one of its summands, hence every element of $A/C$ is invertible. In particular $A/C$ is a semifield and the claim follows from part (1).

Part (3) follows from (1) and the fact that $(a+b, a) \in C \iff a \geq b$ with respect to the order defined by $C$.
\end{proof}

In view of the previous result we introduce the following notation. 
\begin{notation}
Let $A$ be semiring and let $P$ be a prime on $A$. For two elements $r_1, r_2 \in A$ we say that $r_1 \leq_P r_2$ (resp. $r_1 <_P r_2$, resp. $r_1 \sim_P r_2$) whenever $\overline{r_1} \leq \overline{r_2}$ (resp. $\overline{r_1} <  \overline{r_2}$, resp. $\overline{r_1} = \overline{r_2}$) in $A/P$. Here by $\overline{r}$ we mean the image of $r$ in $A/P$.
\end{notation}

\begin{lemma}\label{lem: primes_contain_primes}
Let $A$ be a $\mb$-algebra and let every element of $A$ be a sum of invertibles. Let $P$ and $Q$ be congruences on $A$ such that $P \subseteq Q$. Then if $P$ is prime then so is $Q$.
\end{lemma}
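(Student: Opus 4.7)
The plan is to invoke part (2) of Corollary~\ref{cor: prop_prime}, which under the sum-of-invertibles hypothesis characterizes prime congruences as precisely those whose quotient is totally ordered. The whole argument reduces to showing that total orderedness of the quotient is inherited by larger congruences, together with a check that the hypothesis of Corollary~\ref{cor: prop_prime}(2) itself propagates to quotients.

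First I would verify that the hypothesis ``every element is a sum of invertibles'' descends to every quotient $A/C$: the canonical projection $\pi \colon A \twoheadrightarrow A/C$ is a unital semiring homomorphism, so it sends units to units, and because $\pi$ is surjective and additive, every element of $A/C$ is a sum of units. In particular, Corollary~\ref{cor: prop_prime}(2) is applicable to both $A/P$ and $A/Q$.

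Next, since $P \subseteq Q$, the universal property of quotients furnishes a surjective semiring map $\varphi \colon A/P \twoheadrightarrow A/Q$. By Corollary~\ref{cor: prop_prime}(2), $P$ being prime means $A/P$ is totally ordered, i.e.\ for all $a,b \in A/P$ one has $a+b=a$ or $a+b=b$. Applying $\varphi$ preserves sums and equalities, so for every $\bar a, \bar b \in A/Q$ one still has $\bar a + \bar b = \bar a$ or $\bar a + \bar b = \bar b$; in other words, $A/Q$ is totally ordered. Provided $Q$ is proper (the improper congruence is excluded from the definition of prime), another application of Corollary~\ref{cor: prop_prime}(2) to $A/Q$ concludes that $Q$ is prime.

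The main (essentially only) obstacle is this proper/improper subtlety: a congruence strictly larger than a proper prime could a priori be the improper congruence $A\times A$, which by definition is not prime, so the statement must be read with $Q$ assumed proper. Apart from flagging this point, the proof is a clean two-line application of the order-theoretic characterization of primes established earlier.
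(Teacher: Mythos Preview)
Your proof is correct and follows essentially the same route as the paper: pass to the surjection $A/P \twoheadrightarrow A/Q$, note that total orderedness is preserved, and invoke Corollary~\ref{cor: prop_prime}(2). You are slightly more thorough than the paper (verifying the sum-of-invertibles hypothesis descends to quotients and flagging the properness caveat for $Q$), but the core argument is identical.
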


\begin{proof}
Since $P$ is prime then $A/P$ is totally ordered. However, via the semiring morphism $A/P \twoheadrightarrow A/Q$ so is $A/Q$. Now Corollary~\ref{cor: prop_prime} (2) implies that $Q$ is prime. 
\end{proof}

In this paper the only $\mb$-algebras we will consider will be $\tPoly$, $\tLaur$ and their quotients.

\begin{definition}\label{def:geo-prime}
We call a prime congruence $P$ of $\tLaur$ {\it a geometric prime} whenever $\tLaur/P \cong \TT$.
In particular, the points of $\RR^n$ are in bijection with the geometric prime congruences of $\tLaur$. 
We call a prime congruence $P$ of $\tLaur$ {\it a minimal prime} if there are no prime congruences contained in it. We define geometric and minimal primes on $\tPoly$ analogously.
\end{definition}

\begin{proposition}\label{prop: min_prime}
   Let $P$ be a minimal prime congruence on $\tPoly$ or $\tLaur$ then each equivalence class of $P$ contains exactly one monomial.
\end{proposition}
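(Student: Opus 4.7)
The plan is to prove the contrapositive: if two distinct monomials $m_1, m_2$ lie in the same class of a proper prime $P$ on $\tLaur$ (or $\tPoly$), then $P$ cannot be minimal, because a strictly smaller prime $Q \subsetneq P$ can be constructed.

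I would first treat $\tLaur$. By Corollary~\ref{cor: prop_prime}, $\tLaur/P$ is cancellative and totally ordered, so addition is maximum and the image of any polynomial equals the image of one of its terms, which handles the ``at least one'' half of the statement. Structurally, $P$ is determined by its behaviour on the multiplicative group of nonzero terms $T \cong \RR \times \ZZ^n$: setting $K = \{t \in T : t \sim_P 1\}$, the quotient $T/K$ inherits a total group order $<$ from $\tLaur/P$, and $f \sim_P g$ iff the $<$-maximal terms of $f$ and $g$ lie in a common $K$-coset. Having two distinct monomials identified is exactly $K \neq 0$.

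Now the construction. Since $T$ is torsion-free abelian, so is $K$, and consequently $K$ admits a total group order $<_K$. Lex-combining $<$ with $<_K$ gives a total group order $\prec$ on $T$ whose induced quotient order on $T/K$ agrees with $<$. The pair $(\{0\}, \prec)$ defines a prime $Q$ on $\tLaur$ via the analogous recipe: two polynomials are $Q$-related iff they share their (now unique) $\prec$-maximal term. Because $\prec$ refines the preorder pulled back from $<$, every $Q$-pair is a $P$-pair, so $Q \subseteq P$; strictness follows from any non-trivial element of $K$, which produces two distinct terms in a single $K$-coset but differing $\prec$-maxima. This contradicts minimality, forcing $K=0$.

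For $\tPoly$ the added subtlety is that monomials are not invertible and some may satisfy $m \sim_P 0$. I would first show this cannot happen in a minimal prime: if $x_i \sim_P 0$ then $P$ factors through the quotient $\tPoly \twoheadrightarrow \tPoly/\langle x_i\rangle$ via a prime $\bar P$, and letting $x_i$ be infinitesimal over $\bar P$ (order terms first by the highest $x_i$-power present, higher meaning smaller, then break ties by $\bar P$) yields a prime $Q \subsetneq P$. With no monomial $\sim_P 0$, the monomials generate a cancellative submonoid of $\tPoly/P$, so localizing produces a prime $P^*$ on $\tLaur$ extending $P$. $P^*$ inherits minimality from $P$ because any smaller prime on $\tLaur$ restricts, after clearing denominators via Proposition~\ref{prop: closedTwProd}, to a smaller prime on $\tPoly$; the Laurent conclusion then transfers back. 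The main obstacle throughout is the bookkeeping of the correspondence between primes and ordered-group quotients of $T$; the key enabling fact is that every torsion-free abelian group is totally orderable, which is what allows the lex refinement of the order on $T/K$.
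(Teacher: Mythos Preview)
Your approach is genuinely different from the paper's: the paper simply cites \cite[Theorem~4.14(b)]{JM14}, which relies on the explicit matrix classification of prime congruences of $\tLaur$ and $\tPoly$. From that classification one reads off that a prime is minimal precisely when its defining matrix has full rank $n+1$, and then uniqueness of the monomial in each class is immediate. Your argument is classification-free: you start from a prime whose kernel $K$ on the term group $T\cong\RR\times\ZZ^n$ is nontrivial and refine the induced order on $T/K$ lexicographically by an order on $K$ to produce a strictly smaller prime. This is more self-contained and conceptually clean, and the reduction of the $\tPoly$ case to the $\tLaur$ case via ``make $x_i$ infinitesimal'' plus localization is correct.

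There is, however, a real gap in the Laurent step. You assert that an \emph{arbitrary} total group order $<_K$ on $K$ will do, and that the resulting lexicographic order $\prec$ on $T$ yields a prime congruence $Q$ via ``same $\prec$-maximal term''. This fails when $P$ collapses $\TT$ to $\BB$, i.e.\ when $\RR\times\{0\}\subseteq K$: in that case $\prec$ restricted to $\RR$ equals $<_K|_\RR$, and if this is not the standard order on $\RR$ then the relation you write down is \emph{not a congruence}. Concretely, take $n=1$ and order $(a,k)\succ(b,l)$ iff $a<b$, or $a=b$ and $k>l$ (so reverse order on coefficients, then degree). With $f=0\cdot x$, $f'=0\cdot x+1\cdot x^2$, $g=g'=0\cdot x^2$, one has $\pi(f)=\pi(f')=(0,1)$ so $f\sim_Q f'$, yet $\pi(f+g)=(0,2)$ while $\pi(f'+g')=(0,1)$, violating additivity. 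The fix is easy but must be stated: when $\RR\subseteq K$ one has $K=\RR\times K'$ with $K'\subseteq\ZZ^n$ torsion-free, so one can (and must) choose $<_K$ to be lexicographic with the standard order on $\RR$ first. Equivalently, in the language of the paper, this is the admissibility condition on the first column of the defining matrix; your refinement has to respect it.
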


\begin{proof}
    Follows directly form \cite[Theorem 4.14(b)]{JM14}.
\end{proof}

In \cite[Section 4]{JM14} a complete description of the prime congruences of $\tLaur$ and $\tPoly$ was given. In the quotients each element can be represented by a monomial $t\vecx^\vecu$, hence primes correspond to semigroup orderings on quotients of the multiplicative semigroup of monomials. Similarly to term orders for polynomial rings, each such ordering can be defined by a matrix $U$ with real entries and $n+1$ columns, setting $t_1\vecx^{\pmb{u_1}} \geq t_2\vecx^{\pmb{u_2}}$ if and only if the first non zero element of $U((t_1, \pmb{u_1}) - (t_2, \pmb{u_2}))$ is positive. As $U$ must respect the order on the elements of $\TT$, the first column of $U$ is either zero or its first non-zero entry is positive. A matrix that satisfies this condition and that each of its rows are non redundant in the sense that they further refine the ordering is called \textit{admissible}. It was shown in \cite[Theorem 1.1]{JM14} that every prime congruence $P$ of $\tLaur$ is defined by an admissible matrix $U$ with rank $r(U) = \dim(\tLaur / P)$. Moreover that the primes of $\tPoly$ which do not identify any variables with $0_{\TT}$ are in bijection with the primes of $\tLaur$, and the primes that do identify any variables with $0_{\TT}$ hence correspond to a prime of a polynomial semiring in less variables.

%Moreover that the primes of $\tPoly$ with trivial kernel \red{---we haven't defined kernel of a prime ---- } are in bijection with the primes of $\tLaur$, and the primes with non-trivial kernel identify a subset of the variables with $0$ and hence correspond to a prime of a polynomial semiring in less variables.

Every point of $P \in \RR^n$ defines a congruence by identifying polynomials that are equal on that point. These congruences are prime, and the congruence corresponding to the point $(a_1, \dots, a_n) \in \RR^n$ is defined by the matrix $U = (1, a_1, \dots, a_n)$. These are exactly the geometric primes, and for a polynomial $f$  and the geometric prime congruence $P$ we write $f(P)$ for the value of $f$ on the corresponding point.

If $f = \sum_{u \in \mathbb{Z}^n} {a_u}\vecx^\vecu \in \tLaur$, then we say that $f$ \textit{tropically vanishes} at a point $P \in \RR^n$ if $f$ attains its maximum at $P$ at least twice, where $f(P) = \text{max}\{ a_u + u\cdot P : u\in supp(f) \}$. Note that $+$ denotes the usual addition of real numbers. With some care this definition can be extended to polynomials of $\tPoly$ as shown in \cite{MS}.

%In \cite{JM14} we show that we can identify the points of $\rma^n$ with the geometric prime congruences on $\tPoly$. 
Now let $I$ be an ideal of $\tLaur$, $H \subseteq \RR^n$, and $C$ is a congruence on $\tLaur$. In this paper we will consider the following sets:
\begin{equation*}
\begin{split}
\VV(I) &= \{P \in \RR^n : \forall f \in I, f \text{ tropically vanishes at } P\} \\
 & = \{P \text{ geometric prime congruence} : \forall f \in I, bend(f) \subseteq P\},  \\
\II(H) &= \{f : f \text{ tropically vanishes on } H\} \\
&= \{f : bend(f) \in P,\ \forall \text{ prime congruences } P \in H\} \\
\VC &= \{ a \in \RR^n : f(a) = g(a), \forall (f,g) \in C\} \\
&= \{P \text{ geometric prime congruence } : P \supseteq C\}
%\text{ and } \\
%\IC &= \{ f \in \tPoly : f \text{ attains its maximum twice with respect to the order }\\ &\ \ \ \ \text{ of the prime congruence }C \}
\end{split}
\end{equation*}

We refer to the first of these sets as the \textit{tropical variety of $I$}, or \textit{tropical vanishing locus of $I$}. We can similarly define these sets for ideals and congruences of $\tPoly$ and subsets and points on $\TT^n$. 

We will often want to know if there is geometric prime congruence containing a prime congruence $P$ on $\tLaur$ or $\tPoly$. If $P$ has a geometric prime lying over it then $P$ does not collapse the coefficient semifield to $\mb$. This condition is also equivalent to having %a non-zero entry in the first column of the defining matrix of $P$.
the (1,1) entry of the defining matrix of $P$ not equal to 0. 

\begin{remark}\label{rem: V(prime_cong)}
The variety $\VV(P) \subseteq \RR^n$ of a prime congruence $P$ contains at most one point. Note that $\VV(P)$ is empty when $P$ is not contained in a geometric prime congruence. Assume for contradiction that there are two distinct points $a, b$ in $\VV(P)$. Consider the congruence $C = \{ (f,g)\ :\ f(a)=g(a) \text{ and } f(b) = g(b)\}$. Notice that $P \subseteq C$ and that there is a surjective semiring morphism $\tLaur/P \rightarrow \tLaur/C$. Moreover, since $\tPoly/P$ is totally ordered, so is $\tLaur/C$. However, this cannot be the case since $\tLaur/C$ is isomorphic to $\rma \times \rma$ via the map $f \mapsto (f(a), f(b))$. The same is true for the varieties of prime congruences on $\tPoly$.
\end{remark}

\begin{example}\label{ex:var-geom-point}
    Let $P$ be a prime congruence with defining matrix $U$, such that the first row of $U$ is $(1, a_1, \dots, a_n)$, then $\VV(P) = (a_1, \dots, a_n)$.
\end{example}

% In view of Example~\ref{ex:var-geom-point} we will sometimes refer to geometric primes as \textit{geometric points}. - i promise you we will do no such thing :)

%------------------ Dimension prelims ----------------------------

One can define (Krull) dimension for semirings, analogously to the Krull dimension of rings, measured by the number of strict inclusions in the longest chain of prime ideals. As shown in \cite{AA94}, with this definition, the dimension of any polynomial semiring is infinite. In \cite{JM14} we propose the following definition for the Krull dimension of a semifield.

\begin{definition}\label{def:Krull}
Let $R$ be a $\mb$-algebra. The \emph{Krull dimension} of $R$ is the number of strict inclusions in the longest chain of prime congruences in $R$.
\end{definition}

%In the case when $R$ is the semiring $\tPoly$ we can say even more. Since every prime congruence is defined by an admissible matrix $U$, we say that $P$ is \emph{a prime congruence of rank} $r$ whenever $r$ is the rank of $U$, denoted $r(U)$. We also show \cite[Proposition~4.13]{JM14} that the Krull dimension of the quotient $\tPoly/P$ is equal to $r(U)$. \\

Classically, the dimension of an irreducible affine variety is equal to the Krull dimension of its coordinate ring. Tropically, we need to replace the notion of the coordinate ring with the ``coordinate semiring'' of the affine tropical scheme associated to this tropical variety as defined in \cite{GG13}.
\begin{definition}
If $X$ is a tropical variety defined by an ideal $I \in \tPoly$, the coordinate semiring of the tropical scheme associated to $X$ is $\tPoly/Bend(I)$.   
\end{definition}

\newpage
\section{Results}
\subsection{Tropical ideals}

\begin{definition}\label{def: trop_ideals}
An ideal $I \subseteq \tPoly$ is a \textit{tropical ideal} if for each degree $d \geq 0$ the set $I_{\leq d}$ of polynomials in $I$ of degree at most $d$ is a tropical linear space, or equivalently, $I_{\leq d}$ is the set of vectors of a valuated matroid. More precisely, the ideal $I$ satisfies a monomial elimination axiom which can be stated in the following way:\\
For any $f, g \in I_{\leq d}$ and any monomial ${\pmb x}^{\pmb u}$, which appears in both $f$ and $g$ with the same ($\neq 0_{\rma}$) coefficient then there exists a polynomial $h \in I_{\leq d}$, such that the coefficient $c_u(h)$ of ${\pmb x}^{\pmb u}$ in $h$ is $0_{\rma}$ and for the coefficient $c_v(h)$ of any other monomial ${\pmb x}^{\pmb v}$ of $h$ we have that $c_u(h) \leq \max (c_v(f), c_v(g))$ with equality holding whenever $c_v(f) \neq c_v(g)$.
\end{definition}

\begin{example}\label{ex:realizable}
Consider a polynomial $f \in K[x_1, \dots, x_n]$, where $K$ is a valued field. We will denote by $trop(f)$ the polynomial in $\tPoly$ obtained from $f$ by valuating all coefficients of $f$ and replacing the operations multiplication and addition with their tropical counterparts. Let $J \subseteq K[x_1, \dots, x_n]$, then the ideal $trop(J) = \{ trop(f) \ : \ f\in J \}$ is a \emph{realizable} tropical ideal. 
\end{example}
%\noindent{\color{red} define tropical ideal in $\tLaur$?}

%here is a one to one correspondence between prime tropical ideals of $\tLaur$ and prime tropical ideals of $\tPoly$ not containing a monomial

% \orange{
% \begin{proposition}\label{prop: trop_ideals_corresp}
% There is a bijection between prime tropical ideals of $\tLaur$ and prime tropical ideals of $\tPoly$ not containing a monomial via the maps
%    $$ I \to I\cap \tPoly \quad \text{and} \quad J \to J\tLaur.$$
% \end{proposition}
% 

\begin{proposition}\label{prop: trop_ideals_corresp}
%  There is a one to one correspondence between prime tropical ideals of $\tLaur$ and prime tropical ideals of $\tPoly$ not containing a monomial given by the following maps:
% \begin{align*} 
%     \{\text{Prime tropical ideals of } \tLaur \} &\to \{ \text{Prime tropical ideals of } \tPoly \text{ not containing a monomial}\}\\
%                                     I &\mapsto I\cap \tPoly \\
%                                     J\tLaur &\mapsfrom J
% \end{align*}
There exits a bijection $\varphi$ from the set of prime tropical ideals of $\tLaur$ to set of prime tropical ideals of $\tPoly$, defined as $\varphi(I) = I\cap \tPoly$ and $\varphi^{-1}(J) = J\tLaur$. 
\end{proposition}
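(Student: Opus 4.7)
The plan is to verify the bijection by establishing that both maps are well defined and that their compositions are identities. Throughout I would use the concrete description $\tLaur = \tPoly[(x_1\cdots x_n)^{-1}]$, so every element of $\tLaur$ has the form $(x_1\cdots x_n)^{-m}g$ with $g\in\tPoly$ and $m\geq 0$.

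I would first handle $\varphi$. The contraction $I\cap\tPoly$ is prime as an ideal of $\tPoly$ by the standard subring argument: closure under the semiring operations is inherited from $I$, and primality passes to any sub-semiring containing a given pair of elements. For the tropical property, given $f,g \in (I\cap\tPoly)_{\leq d}$ sharing a monomial with the same coefficient, the monomial elimination axiom applied to the tropical ideal $I \subseteq \tLaur$ produces an $h\in I$ whose support lies in $supp(f)\cup supp(g) \subseteq \mn^n$; hence $h\in I\cap\tPoly$ and the axiom holds in the contraction.

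For $\varphi^{-1}$, I would first note that if $J$ contains some variable $x_i$ then $J\tLaur$ contains the unit $x_i$ and is improper; such $J$ must therefore be excluded from the bijection, mirroring the analogous condition noted earlier in the paper for prime congruences. Assuming no variable lies in $J$, primality of $J\tLaur$ follows by clearing denominators in any twisted product in $J\tLaur$ and invoking primality of $J$. The two identities $(I\cap\tPoly)\tLaur = I$ and $(J\tLaur)\cap\tPoly = J$ then follow by standard denominator-clearing: for the first, any $f\in I$ becomes a polynomial still in $I$ after multiplying by $(x_1\cdots x_n)^m$ for large $m$, so it lies in $(I\cap\tPoly)\tLaur$; for the second, given $f\in (J\tLaur)\cap\tPoly$, writing $(x_1\cdots x_n)^m f = g \in J$ and using primality of $J$ together with the no-variable condition forces $f\in J$.

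Finally, I would derive the tropical property of $J\tLaur$ from the just-proved identity $(J\tLaur)\cap\tPoly = J$. For any finite support set $S\subseteq\mz^n$, choose $m$ large enough that $S + (m,\ldots,m) \subseteq \mn^n$; multiplication by the unit $(x_1\cdots x_n)^m$ gives a bijection between the polynomials in $J\tLaur$ supported on $S$ and those in $J$ supported on $S+(m,\ldots,m)$, transporting the valuated matroid structure of $J_{S+(m,\ldots,m)}$ to $(J\tLaur)_S$. The main obstacle I anticipate is precisely this tropical-ideal verification for $J\tLaur$, which requires the appropriate definition of a tropical ideal in $\tLaur$ (by tropical-linear-space conditions on each finite support set, since the degree filtration is not available) and the observation that multiplication by a unit monomial is an isomorphism of valuated matroids between shifted support sets.
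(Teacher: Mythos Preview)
Your argument is correct and follows essentially the same route as the paper: both directions use the observation that the eliminating polynomial $h$ has support contained in $supp(f)\cup supp(g)$ (hence stays polynomial), and primality is transferred by clearing monomial denominators. You are in fact more thorough than the paper, which does not explicitly verify that the two maps are mutual inverses and only parenthetically notes the monomial-in-$J$ obstruction that you flag; one terminological slip to fix is that the product $fg$ you clear denominators in is the ordinary semiring product, not the ``twisted product'' (which in this paper refers to the operation on pairs in $R\times R$).
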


\begin{proof}
Let $I \subseteq \tLaur$ be a tropical prime ideal and $J = I \cap \tPoly$. Note that $J$ is a proper ideal and can not contain a monomial. We will show that $J$ is also prime and tropical. Assume it is not prime, that is, there are $f, g \in \tPoly$ such that $fg \in J$ but neither $f$ nor $g$ is in $J$. But since $J \subseteq I$ then $fg \in I$ and since $I$ is prime then without loss of generality $f \in I$. However, since $f \in \tPoly$ so $f \in J$, contradicting the assumption that $J$ is not prime. \par
%Now assume that $J$ is not tropical. Then there exist $f, g \in J$ such that they don't satisfy the "cancelling" condition in Definition~\ref{def: trop_ideals}. However, since $J \subseteq I$, and $I$ is tropical then $f, g$ satisfy Definition~\ref{def: trop_ideals} 
Since $I$ is a tropical ideal, for any $f, g \in J \subseteq I$ there exists some $h \in I$ which satisfy the property in Definition~\ref{def: trop_ideals}. Since $f, g$ are polynomials, so is $h$ in the definition. So $J$ satisfies the conditions of Definition~\ref{def: trop_ideals} and hence $J$ is tropical.  \par
Now consider $J \subseteq  \tPoly$ and let $I = J\tLaur$. Note that $I$ is a proper ideal if and only if $J$ does not contain a monomial. Let $fg \in I$, then there exists a monomial $m \in \tPoly$ such that $mf$ and $mg$ are both polynomials. Then $mfmg \in I$ but it is also in $J$. Now without loss of generality let $mf \in J$ and since $m \not\in J$ by assumption, then $f \in J$ and so $f \in I$. Similarly, $I$ is tropical, since the circuits of $I$ are circuits of $J$ multiplied by monomials. 
\end{proof}

\subsection{Relating prime ideals and prime congruences}
$ \ $\par

In view of Proposition~\ref{prop: trop_ideals_corresp} we will state all results for the Laurent polynomials. We describe a relation between ideals and congruences in $\tLaur$. Let $C$ be a congruence on $\tLaur$. Consider the set
%Denote by $\polyFromBend{C}$ the set of polynomials whose bend relations lie in $C$.
$$\polyFromBend{C} = \{f \in \tLaur : bend(f) \subseteq C\}.$$
One can define this similarly in the $\tPoly$ case. 

%It is easy to check that $\polyFromBend{C}$ is an ideal. 

    \begin{proposition}
        Let $C$ be any congruence on $\tLaur$ then the set $\polyFromBend{C}$ is an ideal. 
    \end{proposition}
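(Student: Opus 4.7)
The plan is to verify directly the three defining properties of an ideal: containing $0$, closure under addition, and closure under multiplication by every element of $\tLaur$. All three follow from the congruence axioms applied to the bend pairs, once one records the correct bookkeeping for supports.

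Containment of $0$ is immediate since $bend(0) = \emptyset$. For the additive closure, I would first extend the notation $f_{\hat u}$ to any exponent $u \in \ZZ^n$ by the convention that $f_{\hat u} = f$ whenever $u \notin supp(f)$; under this convention the hypothesis $f \in \polyFromBend{C}$ is equivalent to $(f, f_{\hat u}) \in C$ for \emph{every} $u$ (using reflexivity off the support). The key observation to record explicitly is the identity
$$(f+g)_{\hat u} \;=\; f_{\hat u} + g_{\hat u} \qquad \text{for every } u \in \ZZ^n,$$
which holds because the coefficient of $x^u$ in $f+g$ is $c_u(f)+c_u(g)$ (tropically, $\max(c_u(f),c_u(g))$), and deleting this single term from $f+g$ produces the same polynomial as deleting $c_u(f)x^u$ and $c_u(g)x^u$ from $f$ and $g$ before adding. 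Given this identity, combining $(f,f_{\hat u}) \in C$ and $(g,g_{\hat u}) \in C$ via the additive axiom of a congruence yields $(f+g,(f+g)_{\hat u}) \in C$, so $f+g \in \polyFromBend{C}$.

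For multiplicative closure I would reduce to monomials first. If $m = c\,x^v$ is a single term and $f \in \polyFromBend{C}$, then $supp(mf) = v + supp(f)$ and $(mf)_{\widehat{v+u}} = m\cdot f_{\hat u}$; multiplying the pair $(f, f_{\hat u}) \in C$ by $(m,m) \in C$ through the multiplicative congruence axiom gives $(mf, m\,f_{\hat u}) \in C$, so $bend(mf) \subseteq C$. For an arbitrary $h \in \tLaur$, write $h = \sum_j m_j$ as a finite sum of terms; then $hf = \sum_j m_j f$, each summand lies in $\polyFromBend{C}$ by the monomial case, and the additive closure already established finishes the argument. An identical argument handles the $\tPoly$ version.

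The only genuinely delicate step is the support identity $(f+g)_{\hat u} = f_{\hat u} + g_{\hat u}$ when $x^u$ lies in the support of both $f$ and $g$: one has to check that deleting the $\max$-coefficient term from $f+g$ really agrees with deleting both individual terms and then summing. Once this is in place, the congruence axioms do all of the remaining work and no deeper results (e.g.\ Proposition~\ref{prop: closedTwProd}) are needed.
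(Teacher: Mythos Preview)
Your argument is correct. The additive closure step is essentially identical to the paper's: the paper also reduces to the identity $(f+g,(f+g)_{\hat u})=(f,f_{\hat u})+(g,g_{\hat u})$, splitting into the cases $u\in supp(f)\cap supp(g)$ and $u$ in only one support, where your extended convention $f_{\hat u}=f$ for $u\notin supp(f)$ packages both cases uniformly.

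The multiplicative closure step, however, is a genuinely different route. The paper attacks $(fh,(fh)_{\hat k})$ directly: in the collision-free case it factors this pair as the twisted product $(f,f_{\hat\imath})(h,h_{\hat\jmath})$, and in the case of colliding exponents it writes $(fh,(fh)_{\hat k})$ as a twisted product $(f,f_{\hat s})(h_{\hat\jmath},0)$ plus a diagonal pair, invoking Proposition~\ref{prop: closedTwProd} to conclude. Your reduction---first handle a single term $m$ via the componentwise congruence axiom and the shift identity $(mf)_{\widehat{v+u}}=m\,f_{\hat u}$, then write $h=\sum_j m_j$ and iterate the already-proved additive closure on $hf=\sum_j m_jf$---sidesteps the twisted product entirely and needs only the raw congruence axioms. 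This buys you a shorter, more elementary argument that avoids the case analysis on exponent collisions; the paper's approach, on the other hand, exhibits explicitly how the bend pair of a product decomposes, which is the germ of the twisted-product factorization used later in Proposition~\ref{prop: bendPrime-Iprime}.
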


\begin{proof}
    Let $f, g \in \polyFromBend{C}$, i.e., $bend(f), bend(g) \subseteq C$. Consider the pair $(f+g, (f+g)_{\hat \imath})$ for each $i \in supp(f+g)$. If $i$ is in the support of both summands, then $(f+g, (f+g)_{\hat \imath}) = (f, f_{\hat \imath}) + (g, g_{\hat \imath}) \in C$, since $bend(f), bend(g) \subseteq C$. In the case when $i$ is only in the support of one of the summands, say $f$, then $(f+g, (f+g)_{\hat \imath}) = (f, f_{\hat \imath}) + (g, g) \in C$. This shows that $bend(f+g) \subseteq C$ and thus $f+g \in \polyFromBend{C}$.
    
    Let $f \in \polyFromBend{C}$ and $h \in \tLaur$. We will show that $fh \in \polyFromBend{C}$, by showing that $bend(fh) \subseteq C$. Consider the pair $(fh, (fh)_{\hat k})$ for each $k \in supp(fh)$. Let $k = (i, j)$, i.e., $(fh)_k = f_ih_j$. If $f_ih_j \neq af_sh_r$, for all $(i,j) \neq (s,r)$ and $a\in \TT$, then by direct computation we see that $(fh, (fh)_{\hat k}) = (f, f_{\hat\imath})(h, h_{\hat\jmath})$.

    Now let $f_ig_j = af_sg_r$ for some $(i,j) \neq (s,r)$ and $a\in \TT$. Assume without loss of generality that the coefficient of $f_sg_r$ is bigger than that of $f_ig_j$. %; in particular $f_ig_j + f_sg_r = f_sg_r$. 
    Then we can write $(fh, (fh)_{\hat k}) = (f, f_{\hat s})(h_{\hat\jmath}, 0) + (d, d),$ where $d$ is the polynomial $$d = \sum_{\substack{l\in supp(f), \\l\neq i}}f_lh_j + \sum_{\substack{l\in supp(h), \\l\neq r}}f_sh_l.$$ Now $(f, f_{\hat s})(h_{\hat\jmath}, 0)$ is in $C$ by Proposition~\ref{prop: closedTwProd}. The pair $(d,d)$ also in $C$ since the diagonal is contained in every congruence and so we conclude that $(fh, (fh)_{\hat k}) \in C$.
\end{proof}

If $I$ an ideal of $\tLaur$ and $C = Bend(I)$ then $\polyFromBend{C} \supseteq I$. In fact, equality rarely holds. This is illustrated in the following example:

\begin{example}
Let $I$ be the ideal of $\mb[x^{\pm 1},y^{\pm 1}]$ generated by the set $\{x+y, x+z\}$. Observe that $bend(y+z) \subseteq Bend(I)$ and so $y+z \in \polyFromBend{Bend(I)}$ but $y+z \not\in I$. 
\end{example}

This example justifies the following definition.
\begin{definition}\label{def:closed_ideal}
An ideal $I$ of $\tLaur$ or $\tPoly$ is $\textit{closed}$ if $\polyFromBend{Bend(I)} = I$.
\end{definition}

% \red{-- next prop should be removed -- }
% \begin{proposition}\label{prop: closed_ideals_corresp}
% Let $I \subseteq \tLaur$ be an ideal. I is closed if and only if $I \cap \tPoly$ is.
% \end{proposition}

% \begin{proof}
% Note that an ideal $J$ is closed if and only if, whenever for a polynomial $f$ we have $bend(f) \subseteq Bend(I)$ we also have $f \in I$. First let us assume that $I$ is a $\closed$ ideal and $f \in I \cap \tPoly$, such that $bend(f) \subseteq Bend(I \cap \tPoly)$. Then we also have $bend(f) \subseteq Bend(I)$ and hence $f \in I \cap \tPoly$, showing that $I \cap \tPoly$ is closed.
% For the other direction if $I \cap \tPoly$ is $\closed$ and $bend(f) \subseteq Bend(I)$ then $bend(f)$ is generated by the bend relations of finitely many elements $g_1,\ldots,g_k \in I$. Pick $m$ to be a monomial such that all of $mf, mg_1, \ldots, mg_k$ are elements of $\tPoly$. It follows that $bend(mf)$ is generated by $bend(mg_1),\ldots,bend(mg_k)$ and since $I \cap \tPoly$ is $\closed$ this implies that $mf \in I \cap \tPoly$ and so $f \in I$.
% \end{proof}

\begin{proposition}
The intersection of $\iclosed$ ideals is a $\iclosed$ ideal.
%Let $I_i$ be ideals which satisfy the property $\polyFromBend{Bend(I_i)} = I_i$, then so does their intersection.
\end{proposition}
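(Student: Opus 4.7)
The plan is to unpack both sides of the equation $\polyFromBend{Bend(I)} = I$ directly, using only the monotonicity of the two operations $I \mapsto Bend(I)$ and $C \mapsto \polyFromBend{C}$, together with the hypothesis that each ideal in the family is closed. There is no subtle obstacle — the point is entirely formal.

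Let $\{I_\alpha\}_{\alpha \in A}$ be a collection of closed ideals in $\tLaur$ (or $\tPoly$), and set $I = \bigcap_{\alpha \in A} I_\alpha$. First I would note that $I$ is an ideal (intersections of ideals are ideals in any semiring, for the same reason as in commutative ring theory: the defining conditions of an ideal are preserved under arbitrary intersections). Second, I would observe the automatic inclusion $I \subseteq \polyFromBend{Bend(I)}$, which holds for every ideal because every $f \in I$ satisfies $bend(f) \subseteq Bend(I)$ by definition of $Bend(I)$.

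The remaining task is to prove the reverse inclusion $\polyFromBend{Bend(I)} \subseteq I$. Pick $f \in \polyFromBend{Bend(I)}$, so that $bend(f) \subseteq Bend(I)$. For each $\alpha$ the inclusion $I \subseteq I_\alpha$ of ideals gives $Bend(I) \subseteq Bend(I_\alpha)$ (since $Bend(I)$ is generated by the bend relations of elements of $I$, each of which is an element of $I_\alpha$). Chaining the inclusions, $bend(f) \subseteq Bend(I_\alpha)$, which means $f \in \polyFromBend{Bend(I_\alpha)}$. By closedness of $I_\alpha$ we conclude $f \in I_\alpha$. Since this holds for every $\alpha \in A$, we get $f \in \bigcap_\alpha I_\alpha = I$, as required.

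The argument is thus a one-line monotonicity chase once the definitions are written out; the ``hard part'' is merely recognizing that $Bend(\cdot)$ is monotone on ideals and $\polyFromBend{\cdot}$ is monotone on congruences, which are immediate from the fact that $Bend(I)$ is the smallest congruence containing $\bigcup_{f \in I} bend(f)$.
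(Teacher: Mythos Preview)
Your proof is correct and follows exactly the same monotonicity argument as the paper's proof (the paper phrases it for two ideals $J,L$ rather than an arbitrary family, and leaves the step $\polyFromBend{Bend(J\cap L)} \subseteq \polyFromBend{Bend(J)}$ implicit, but the reasoning is identical).
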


\begin{proof}
For $\closed$ ideals $J, L$ assume that $f \in \polyFromBend{Bend(J \cap L)}$. Then we have $f \in \polyFromBend{Bend(J)}$ and $f \in \polyFromBend{Bend(L)}$. Since $\polyFromBend{Bend(J)} = J$ and $\polyFromBend{Bend(L)} = L$ we conclude that $f \in J \cap L$.
\end{proof}

\begin{example}
 $ $\begin{enumerate}
    \item All tropical ideals are $\closed$ by \cite[Theorem 1.1]{MR14}.
    \item While the intersection of tropical ideals is not necessarily tropical, it is a $\closed$ ideal.
    \item The unique maximal ideal of $\tPoly$ consisting of every non-constant polynomial is $\closed$. 
\end{enumerate}
\end{example}

\begin{remark}\label{rem:closed}
To every congruence $C$ one can associate at most one $\closed$ ideal $I$ such that $C=Bend(I)$ and this $\closed$ ideal is $\polyFromBend{C}$ by definition.
\end{remark}

\begin{example}\label{ex:I_P}
%\begin{enumerate}
   % \item %If $P$ is a geometric prime congruence, then $\polyFromBend{P}$ is a tropical ideal containing the polynomials that attain their maximum at least twice at $P$. %corresponding to the point $P$.
    
    Let $P$ be a geometric prime corresponding to the point $(b_1,\ldots,b_n)\in\rma^n$. Fix a field $k$ with a surjective valuation $v:k\to\rma$ and pick $a_1,\ldots,a_n\in k$ such that $v(a_i)=b_i$. Let $\mathfrak{m}$ be the maximal ideal of $k[x_1\ldots,x_n]$ corresponding to the point $(a_1,\ldots,a_n)\in k^n$. Now $\polyFromBend{P}$ is the tropicalization of $\mathfrak{m}$, and is therefore a tropical ideal. In particular, $\polyFromBend{P}$ consists of those polynomials which attain their maximum at least twice at $P$.
    %More precisely, let $k$ be a valued field with valuation $v:k \to \mathbf{T}$. Let $\mathfrak{m}$ be a maximal ideal of $k[x_1, \dots, x_n]$. The variety of $\mathfrak{m}$ is a point, say $(a_1, \dots, a_n) \in (k^\times)^n$ and its tropicalization is the point $(v(a_1), \dots, v(a_n)) \in \mathbb{R}^n$. Let $P$ be the prime congruence on $\tLaur$ with defining matrix $(1, v(a_1), \dots, v(a_n))$. Then the ideal $\polyFromBend{P}$ is the tropicalization of $\mathfrak{m}$. Note that $Bend(I_P) = \left<x_i \sim v(a_i) : 1\leq i \leq n\right>.$
%    \item When $P$ is a minimal prime congruence on $\tLaur$ %or $\tPoly$ 
    then by Proposition~\ref{prop: min_prime} each equivalence class of the congruence $P$ contains a single term, i.e., no polynomial will attain its maximum twice with respect to the order induced by $P$, hence $\polyFromBend{P}$ is the zero ideal. 
%\end{enumerate}

\end{example}

We are ready to state the first main result of this paper. 
\begin{theorem}\label{prop: bendPrime-iff-Iprime}
\begin{enumerate}
    \item[(i)]  Every closed prime ideal in $\tLaur$ is of the form $\polyFromBend{P}$ for a prime congruence P. 
    \item[(ii)] $\polyFromBend{P}$ is the zero ideal if and only if $P$ is a minimal prime.
    \item[(iii)]  The maps $I \rightarrow Bend(I)$ and $P \rightarrow \polyFromBend{P}$ establish a one to one correspondence between the non-zero closed prime ideals and the non-minimal prime congruences of $\tLaur$.
    \item[(iv)]  The closed prime ideal $\polyFromBend{P}$ is a tropical ideal if and only if P is a geometric prime.
\end{enumerate}
\end{theorem}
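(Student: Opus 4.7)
The plan is to establish the correspondence in four parts: (ii) first, then the half of (i) and (iii) that starts from a prime congruence, then the converse starting from a closed prime ideal (the main obstacle), and finally (iv).

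For (ii), if $P$ is minimal then Proposition~\ref{prop: min_prime} says each equivalence class of $P$ contains a unique monomial, so any polynomial with $\geq 2$ terms has a strictly $<_P$-maximal term in the totally ordered quotient $\tLaur/P$, which prevents $bend(f) \subseteq P$; since properness of $P$ excludes monomials, $\polyFromBend{P} = \{0\}$. Conversely, if $P$ is non-minimal then by the admissible-matrix classification of \cite{JM14} the defining matrix of $P$ has rank less than $n+1$, so $P$ identifies two distinct monomials $m_1, m_2$, and then $m_1 + m_2 \in \polyFromBend{P} \setminus \{0\}$.

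Next I would show that for any prime congruence $P$, $\polyFromBend{P}$ is a closed prime ideal, and that for non-minimal $P$ one has $Bend(\polyFromBend{P}) = P$. Closedness of $\polyFromBend{C}$ for any congruence $C$ is formal: $Bend(\polyFromBend{C}) \subseteq C$ since all generators lie in $C$, and applying $\polyFromBend{-}$ returns the reverse inclusion. For primality, the useful description is that $f \in \polyFromBend{P}$ iff in the totally ordered quotient $\tLaur/P$ the maximum among the terms of $f$ is attained at least twice; if $f, g$ each have unique strict maxima $f_I, g_J$ (as terms with monomials $x^{u_I}, x^{v_J}$), then cancellativity of $\tLaur/P$ makes $f_I g_J$ the strict $>_P$-maximum contribution to the coefficient of $fg$ at $u_I+v_J$, and comparing against all other terms of $fg$ shows this whole term dominates, so $fg \notin \polyFromBend{P}$. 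For $Bend(\polyFromBend{P}) = P$, Corollary~\ref{cor: prop_prime}(3) reduces matters to pairs $(a, a+b)$ with $a \geq_P b$: the case $a \sim_P b$ is a literal bend relation of $a + b \in \polyFromBend{P}$, and in the strict case $a >_P b$ non-minimality of $P$ (translated to $a$ by multiplying a known identification of monomials by a Laurent monomial) supplies a monomial $c$ with $c \sim_P a$, $c \neq a$; then $a+c$ and $a+b+c$ both lie in $\polyFromBend{P}$, and chaining their bend relations by transitivity gives $a+b \sim a+b+c \sim a+c \sim a$ in $Bend(\polyFromBend{P})$.

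The main obstacle is the converse of (i): given a non-zero closed prime ideal $I$, produce a prime congruence $P$ with $\polyFromBend{P} = I$. The plan is to take $P = Bend(I)$, for which closedness of $I$ gives $\polyFromBend{Bend(I)} = I$ automatically; the challenge is showing that $Bend(I)$ is prime. By Corollary~\ref{cor: prop_prime}(2) (Laurent monomials are invertible) this reduces to $\tLaur/Bend(I)$ being totally ordered, and since each polynomial is a sum of monomials it further reduces to comparability of any two monomials. For contradiction, suppose monomials $a, b$ have incomparable images in $\tLaur/Bend(I)$; using a non-zero $h \in I$ together with the bend relations of $h$ and of $(a+b)h$ (all in $Bend(I)$), and the admissible-matrix structure theorem of \cite{JM14} constraining the congruences that can arise as $Bend(I)$ for a closed prime $I$, the strategy is to extract two polynomials $f_1, f_2 \notin I$ with $f_1 f_2 \in I$, contradicting primality of $I$. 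This extraction is the technical heart of the argument.

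Finally for (iv): if $P$ is a geometric prime at a point $p \in \RR^n$, Example~\ref{ex:I_P} identifies $\polyFromBend{P}$ with the tropicalization of the maximal ideal at a classical lift of $p$, which is a realizable and hence tropical ideal. Conversely, if $P$ is non-geometric then $\tLaur/P$ properly extends $\TT$ in Krull dimension, and the extra comparisons coming from lower rows of the defining admissible matrix of $P$ let me exhibit two polynomials in $\polyFromBend{P}$ sharing a term with equal nonzero coefficient whose valuated-matroid monomial elimination would require a polynomial with a coefficient strictly exceeding what $\polyFromBend{P}$ permits, violating Definition~\ref{def: trop_ideals}.
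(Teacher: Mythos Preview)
Your treatment of (ii), of the closedness and primality of $\polyFromBend{P}$, and of $Bend(\polyFromBend{P})=P$ for non-minimal $P$ is correct and matches the paper's Propositions~3.12 and~3.14 in content (your primality argument via ``unique strict maximum term'' is a clean reformulation of the twisted-product computation there).

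The genuine gap is in what you call the main obstacle: showing that $Bend(I)$ is prime for a non-zero prime ideal $I$. Your plan --- assume two monomials $a,b$ are incomparable, pick $h\in I$, look at bend relations of $h$ and $(a+b)h$, and invoke the admissible-matrix classification --- does not carry an actual argument. The admissible-matrix description of \cite{JM14} classifies \emph{prime} congruences; it says nothing about $Bend(I)$ before you know it is prime, so that invocation is circular. The paper's proof does not argue by contradiction and does not use closedness of $I$ at this step. It rests on two concrete algebraic identities: first, from $(a+b+c)(ab+bc+ac)=(a+b)(a+c)(b+c)$ and primality of $I$ one iteratively extracts a \emph{binomial} $a+b\in I$ (Proposition~\ref{prop: prime_has_binom}); second, for arbitrary terms $m_1,m_2$ one computes
\[
\bigl((a+b)m_1+am_2\bigr)\bigl((a+b)m_2+bm_1\bigr)=(a+b)\bigl((a+b)m_1m_2+bm_1^2+am_2^2\bigr)\in I,
\]
so primality forces one factor into $I$, and its bend relations combined with $(a,b)\in Bend(I)$ yield $(m_1+m_2,m_i)\in Bend(I)$. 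These two identities are the whole content of Proposition~\ref{prop: Iprime-bendPrime}, and your proposal does not supply them or any substitute.

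For (iv) your forward direction is fine. For the converse, your sketch (``the extra comparisons from lower rows let me violate monomial elimination'') is pointing in the right direction but is not yet a proof. The paper separates two cases: if the defining matrix of $P$ has more than one row, one argues via Remark~\ref{rmk: higher-rank-primes} that $\polyFromBend{P}$ has no finite tropical basis; if it has a single row with vanishing $(1,1)$ entry, one applies Lemma~\ref{lem: trop-condition} by producing $m_1\sim_P m_2$ with $cm_1>_P 1$ for all $c\in\RR$ and then exhibiting an explicit failure of the elimination axiom using $m_1+m_2+m_3$ and $m_1+m_2+m_3^2$. Your proposal would need to supply this kind of concrete witness rather than asserting its existence.
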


The rest of the section is dedicated to the technical statements needed to prove this result. 

\begin{proposition}\label{prop: bendPrime-Iprime}
The ideal $\polyFromBend{P} \subseteq \tLaur$ is prime whenever $P$ is a prime congruence on $\tLaur$.
\end{proposition}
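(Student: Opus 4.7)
The plan is to reformulate membership in $\polyFromBend{P}$ as a condition on the leading terms of $f$ with respect to the $P$-order, and then deduce primality from the fact that $\tLaur/P$ is cancellative and totally ordered (which is what primality of $P$ gives us by Corollary~\ref{cor: prop_prime}). For a nonzero $f = \sum_i a_i \vecx^{u_i}$ and $i \in supp(f)$, the bend pair $(f, f_{\hat\imath}) = (a_i \vecx^{u_i} + f_{\hat\imath},\, f_{\hat\imath})$ has exactly the form described in Corollary~\ref{cor: prop_prime}(3), and hence lies in $P$ iff $a_i \vecx^{u_i} \leq_P f_{\hat\imath}$. Because the quotient is idempotent and totally ordered, $f_{\hat\imath}$ is $\sim_P$-equivalent to the maximum (in $\leq_P$) of the remaining terms $\{a_j\vecx^{u_j} : j \neq i\}$. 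Thus $f \in \polyFromBend{P}$ iff either $f = 0$ or the $\leq_P$-maximum among the terms of $f$ is attained at more than one index. Properness of $\polyFromBend{P}$ is then immediate: $bend(1) = \{(1, 0_\TT)\}$, and $(1, 0_\TT) \in P$ would contradict $P$ being proper, so $1 \notin \polyFromBend{P}$.

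For primality I would argue by contrapositive. Suppose $f, g \notin \polyFromBend{P}$; by the characterization above, each is nonzero and has a strictly unique $\leq_P$-maximal term, say $a_{i_0}\vecx^{u_{i_0}}$ for $f$ and $b_{j_0}\vecx^{v_{j_0}}$ for $g$. In a cancellative totally ordered $\mb$-algebra, strict $\leq_P$-inequalities are preserved under multiplication by nonzero elements: if $x >_P y$ and $z \neq 0$, then idempotency gives $xz \geq_P yz$, while $xz = yz$ would force $x = y$ by cancellativity. Applying this observation twice yields $a_{i_0} b_{j_0} \vecx^{u_{i_0} + v_{j_0}} >_P a_i b_j \vecx^{u_i + v_j}$ for every $(i,j) \neq (i_0, j_0)$ in $supp(f) \times supp(g)$.

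The final step -- the only genuinely subtle point -- is to translate this strict dominance of a single product into the corresponding statement about $fg$ as a polynomial, where several products may collapse onto the same monomial $\vecx^w$. Writing $c_w = \sum_{u_i + v_j = w} a_i b_j$ for the coefficient of $\vecx^w$ in $fg$, idempotency and total ordering of $\tLaur/P$ imply that $\overline{c_w \vecx^w}$ equals the $\leq_P$-maximum of $\{\overline{a_i b_j \vecx^w} : u_i + v_j = w\}$. Setting $w_0 := u_{i_0} + v_{j_0}$, the term $c_{w_0}\vecx^{w_0}$ of $fg$ therefore has image $\overline{a_{i_0} b_{j_0}\vecx^{w_0}}$, while every other term of $fg$ has strictly smaller image. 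Hence $fg$ has a unique $\leq_P$-maximal term, so $fg \notin \polyFromBend{P}$ by the characterization of the first paragraph. The main obstacle is really just this bookkeeping around monomial collisions in $fg$; the substance of the argument is the preservation of strict $P$-inequalities under multiplication, which is exactly where primality of $P$ (cancellativity plus total order on $\tLaur/P$) is used.
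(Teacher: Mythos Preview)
Your proof is correct and takes a genuinely different route from the paper. The paper argues directly with the twisted-product definition of primality: for each $i\in supp(f)$ and $j\in supp(g)$ it writes $(f,f_{\hat\imath})(g,g_{\hat\jmath})$ as $(fg,(fg)_{\hat k})$ plus a diagonal pair (the diagonal correction handling exactly the monomial-collision case you also worry about), so that $fg\in\polyFromBend{P}$ forces one of the two bend pairs into $P$; choosing $i_0,j_0$ with $(f,f_{\hat\imath_0}),(g,g_{\hat\jmath_0})\notin P$ then gives the contradiction. Your argument instead first proves the characterization ``$f\in\polyFromBend{P}$ iff the $\leq_P$-maximum among the terms of $f$ is attained at least twice,'' and then observes that in the cancellative totally ordered quotient a strictly unique leading term times a strictly unique leading term yields a strictly unique leading term. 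The paper's approach stays closer to the raw definitions and avoids isolating your intermediate lemma; your approach is more semantic and buys a clean, reusable description of $\polyFromBend{P}$ that makes the primality essentially a one-line observation about orders. Both proofs pivot on the same underlying fact---primality of $P$ means exactly cancellativity plus total order on the quotient---and both must do a little bookkeeping for colliding monomials in $fg$, which you handle by taking maxima over $\{(i,j):u_i+v_j=w\}$ and the paper handles by adding a diagonal pair.
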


\begin{proof}
If $P$ %restricts to a minimal prime on $\mb[x_1, \ldots, x_n]$, 
is a minimal prime congruence on $\tLaur$, then as noted in Example~\ref{ex:I_P} the ideal $\polyFromBend{P}$ is the zero ideal so the statement is true because the zero ideal in $\tLaur$ is prime. Let $f, g $ be two polynomials, $f = \sum f_l$ and $g = \sum g_q$, such that $f, g\not\in \polyFromBend{P}$ but $fg \in \polyFromBend{P}$ and so $bend(fg) \subseteq Bend(\polyFromBend{P}) \subseteq P$. If in the formal expression $fg_{\hat \jmath}+f_{\hat \imath}g$ none of the terms $f_ig_j$ and $f_sg_r$ for all pairs $(i,j) \neq (s,r)$ have the same exponent vector, then we can write
\begin{equation}\label{eq:prodTw}
    (f, f_{\hat \imath})(g, g_{\hat \jmath}) = (fg + f_{\hat \imath}g_{\hat \jmath}, fg_{\hat \jmath}+f_{\hat \imath}g) = (fg, fg_{\hat \jmath}+f_{\hat \imath}g) = (fg, (fg)_{\hat k}),
\end{equation}
where $i$ and $j$ are in the support of $f$ and $g$ respectively, $(fg)_k = f_ig_j$, and $f_{\hat \imath}$ denotes $f$ with the term $i$ omitted. In particular, since $(fg, (fg)_{\hat k}) \in bend(fg) \subseteq P$ and $P$ is prime, either $(f, f_{\hat \imath}) \in P$ or $(g, g_{\hat \jmath})\in P$. 

Now assume that for two pairs $(i,j) \neq (s,r)$ the terms $f_ig_j$ and $f_sg_r$ have the same exponent vector. Without loss of generality let $f_sg_r \geq f_ig_j =:d$. Then $$(fg, (fg)_{\hat k}) + (d,d) = (f, f_{\hat \imath})(g, g_{\hat \jmath}).$$ Since both summands on the left hands side are in $P$ then the sum is in $P$, and so either $(f, f_{\hat \imath}) \in P$ or $(g, g_{\hat \jmath})\in P$. Note that when $f_sg_r = f_ig_j$, then $(f, f_{\hat \imath})(g, g_{\hat \jmath}) = (fg, fg)$ which is also in $P$.

Since this holds for each pair $i, j$, it follows that either $f$ or $g$ has all their bend relations in $P$, or equivalently $f \in \polyFromBend{P}$ or $g \in \polyFromBend{P}$.
\end{proof}

The following example shows that not all prime ideals are of the form $\polyFromBend{P}$, in particular, not all prime ideals are $\closed$:
\begin{example}
%\noindent{\color{red}Non-homogeneous with respect to grading. -----------}
Consider the ideal $I \subseteq \TT[x^{\pm 1},y^{\pm 1}]$ that consists of all polynomials that are not homogeneous with respect to $x$. This is a prime ideal since the product of homogeneous polynomials is homogeneous. Consider $f, g \in I$, where $f = x+1$ and $g = xy+1$ and so $bend(f) = (x,1)$ and $bend(g) = (xy,1)$ are elements of $Bend(I)$. Since $Bend(I)$ is a congruence, in particular it is transitively closed, we get that $(x, xy) \in Bend(I)$. However, $x + xy \not\in I$ since it is homogeneous with respect to $x$.
\end{example}

\begin{proposition}\label{prop: PeqBendIP}
Let $P$ be a non-minimal prime congruence of $\tLaur$,
then $P = Bend(\polyFromBend{P})$.
\end{proposition}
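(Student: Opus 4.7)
The plan is to prove the two inclusions separately. The easy direction $Bend(\polyFromBend{P}) \subseteq P$ is immediate: every $f \in \polyFromBend{P}$ has $bend(f) \subseteq P$ by definition, so $P$ is a congruence containing all the generators of $Bend(\polyFromBend{P})$, hence contains $Bend(\polyFromBend{P})$.

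The substance is the reverse inclusion $P \subseteq Bend(\polyFromBend{P})$. By Corollary~\ref{cor: prop_prime}(3), $P$ is generated as a congruence by pairs $(a, a+b)$ with $a,b \in \tLaur$ and $a \geq_P b$, so it suffices to put each such generator into $Bend(\polyFromBend{P})$. The key preliminary I would establish is a \emph{shift lemma}: using non-minimality of $P$, I would find a non-zero $w \in \mathbb{Z}^n$ and $\gamma \in \TT$ with $\gamma x^w \sim_P 1$. Granting this, for any polynomial $h$ and any value $v$ in the quotient $\tLaur/P$, one can start from any term representative of $v$ and multiply by sufficiently many copies of $\gamma x^w$ to obtain a term $c$ with $c \sim_P v$ and $supp(c) \cap supp(h) = \emptyset$. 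I expect the shift lemma to follow from the matrix classification of primes in \cite{JM14}: non-minimality forces $\mathrm{rank}(U_P) < n+1$, and since the cancellative totally ordered quotient $\tLaur/P$ cannot collapse the coefficient direction alone (that would cut $\TT$ down to a non-orderable circle, violating cancellativity), any non-trivial kernel vector of $U_P$ must carry a non-zero monomial component.

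With the shift lemma in hand, for the given pair $(a, a+b) \in P$ I would pick a term $c$ with $c \sim_P a$ and $supp(c) \cap supp(a+b) = \emptyset$. Writing $b = t_1 + \cdots + t_m$ as a sum of terms, define the sequence $f_k = a + \bigl(\sum_{j>k} t_j\bigr) + c$ for $k = 0, \ldots, m$, so that $f_0 = a+b+c$ and $f_m = a+c$. A direct check shows every $f_k$ lies in $\polyFromBend{P}$: for any term $t$ of $f_k$, the complementary sum $f_k - t$ still contains $c \sim_P a$, which dominates every term of $a$ and every $t_j \leq_P b \leq_P a$, so $t \leq_P f_k - t$ and thus $bend(f_k) \subseteq P$.

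Finally I would chain the bends by transitivity. The bend of $f_0$ removing the single term $c$ produces $(f_0, a+b)$; the bend of $f_k$ removing the single term $t_{k+1}$ produces $(f_k, f_{k+1})$; the bend of $f_m = a+c$ removing $c$ produces $(f_m, a)$. Since each $f_k \in \polyFromBend{P}$, all of these pairs lie in $Bend(\polyFromBend{P})$, and composing the chain yields $(a, a+b) \in Bend(\polyFromBend{P})$, as required. The hard part will be the shift lemma --- extracting a genuine non-trivial monomial identification $\gamma x^w \sim_P 1$ purely from non-minimality of $P$. Once that is available, the remaining construction is routine but careful bookkeeping, and it genuinely requires the Laurent setting, since negative powers of $x^w$ are what allow the support of $c$ to be shifted off $supp(a+b)$.
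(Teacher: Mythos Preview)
Your overall strategy matches the paper's: both extract from non-minimality a non-trivial identification of terms, then use it to manufacture polynomials in $\polyFromBend{P}$ whose bends recover a generating set for $P$. The paper's execution is considerably shorter because it first observes that in $\tLaur/P$ every element is represented by a single term, so $P$ is already generated by pairs $(m_1+m_2, m_1)$ with $m_1,m_2$ \emph{terms}. With this reduction your chain $f_0,\dots,f_m$ collapses to just two polynomials: if $a,b$ are distinct terms with $a\sim_P b$, one checks $am_1+am_2+bm_1 \in \polyFromBend{P}$ and $am_1+bm_1 \in \polyFromBend{P}$, takes their bends, and divides through by the invertible term $a$. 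Your version, with $a,b$ arbitrary polynomials, also carries a bookkeeping gap: the step ``the bend of $f_k$ removing $t_{k+1}$ produces $(f_k,f_{k+1})$'' fails whenever $t_{k+1}$ shares its exponent with a term of $a$, since deleting that monomial from $f_k$ then also removes $a$'s contribution at that exponent, which is not what $f_{k+1}$ does.

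The more serious problem is the justification of your shift lemma. The claim that a kernel vector of $U_P$ must have non-zero monomial part because ``collapsing the coefficient direction alone would cut $\TT$ down to a non-orderable circle'' is incorrect: the only non-trivial congruence on the semifield $\TT$ collapses it entirely to $\BB$, which is a perfectly good totally ordered cancellative semifield. Concretely, for $n=1$ the matrix $U=(0,1)$ defines a non-minimal prime of $\TT[x^{\pm1}]$ whose kernel in $\RR\times\ZZ$ is $\RR\times\{0\}$, so every pair of identified terms shares the same exponent and no $w\neq 0$ with $\gamma x^w\sim_P 1$ exists. The paper does not argue via a kernel obstruction; it simply invokes that a non-minimal prime identifies two distinct terms $a\neq b$ (the contrapositive of Proposition~\ref{prop: min_prime}) and works directly with those. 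Note that for the subsequent construction to produce genuine binomials and trinomials (e.g.\ for $am_1+bm_1$ to have two terms) one still needs $a$ and $b$ to have different exponent vectors, so this is a point where both your write-up and the paper's argument deserve care.
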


\begin{proof}
The inclusion $Bend(\polyFromBend{P}) \subseteq P$ is always true. For the other direction, first note that $P$ is generated by pairs of the form $(m_1 + m_2, m_1)$, where $m_1$ and $m_2$ are terms, since every element of its quotient can be represented by a term.
Since $P$ is not a minimal prime there must be distinct terms $a, b$ such that $a \neq b$ and $(a,b) \in P$. Now for any $m_1, m_2$, such that $(m_1 + m_2, m_1) \in P$, equivalently written $m_1+m_2 \sim_P m_1$, we have:

\begin{equation}\label{eq: IP1}
am_1 + am_2 \sim_P am_1 + bm_1 \sim_P am_2 + bm_1 \Rightarrow am_1 + am_2 + bm_1 \in \polyFromBend{P}\end{equation}
and 
\begin{equation}\label{eq: IP2}
am_1 \sim_P bm_1 \Rightarrow am_1 + bm_1 \in \polyFromBend{P}.
\end{equation}
Taking the bend relations of the polynomials in Equations~(\ref{eq: IP1}) and (\ref{eq: IP2}) we obtain:
\begin{equation}\label{eq: IP3} 
am_1 + am_2 \sim_P am_1 + bm_1 \sim_P am_1 \in Bend(\polyFromBend{P}).
\end{equation}
Since $a \in \tLaur$ it is invertible and we can divide the pairs in Equation~(\ref{eq: IP3}) by $a$ and we obtain that $(m_1 + m_2, m_1) \in Bend(\polyFromBend{P}).$ 
\end{proof}

\begin{proposition}\label{prop: prime_has_binom}
Every non-zero prime ideal $I$ of $\tLaur$ contains a binomial.
\end{proposition}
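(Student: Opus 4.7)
The plan is to pick a nonzero $f \in I$ with the minimum number of terms $k$, and argue that $k \leq 2$. As a first observation, since $\tLaur$ is a Laurent semiring every nonzero monomial is a unit, and since $I$ is proper (being prime), it contains no unit. Hence no nonzero element of $I$ is a monomial and $k \geq 2$. If $k = 2$, then $f$ itself is the sought binomial.

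Assume for contradiction $k \geq 3$. After multiplying $f$ by the inverse of one of its terms (a monomial unit), I normalize to $f = 1 + t_2 + \cdots + t_k$, where the $t_i$ are pairwise distinct nontrivial monomial terms. Consider the $k-1$ binomials $b_i := 1 + t_i$ for $i = 2, \ldots, k$. If some $b_i \in I$, that $b_i$ is the desired binomial. Otherwise, since $I$ is prime, the product $B := b_2 b_3 \cdots b_k$ does not lie in $I$. The goal is then to derive a contradiction by producing an element of $I$ with strictly fewer than $k$ terms, contradicting the minimality of $f$.

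To produce this term-reduction, I would exploit the absorption identity $a + a = a$ in $\tLaur$ to compare $B = \sum_{S \subseteq \{2,\ldots,k\}} \prod_{i \in S} t_i$ with suitable products of $f$ against monomial factors: each "subset-product" $\prod_{i \in S} t_i$ already appears among the terms of $f^{|S|}$ or its monomial multiples (all of which lie in $I$), so $B$ should coincide formally with a tropical sum of elements of $I$ and thus belong to $I$ itself. Alternatively, one can pass to the congruence $Bend(I)$: by Proposition~\ref{prop: min_prime}, a minimal prime $P$ over $Bend(I)$ cannot be a minimal prime of $\tLaur$ (the bend relations of $f$ force two distinct monomials into the same equivalence class of $P$), and the argument of Proposition~\ref{prop: PeqBendIP} then produces a binomial in $\polyFromBend{P}$; the remaining task is to descend that binomial back into $I$ itself.

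The main obstacle is exactly this descent/reduction step: in a classical polynomial ring, subtraction would let us eliminate a term directly from $f$, but in $\tLaur$ this tool is unavailable, and the combinatorial bookkeeping needed to match tropical products of elements of $I$ term-by-term against $B$ (or, in the congruence-theoretic approach, to transfer the binomial from the closed ideal $\polyFromBend{P}$ into $I$) is delicate and is the step where the primeness of $I$—as opposed to merely that of $Bend(I)$—must be used in an essential way.
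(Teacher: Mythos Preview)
Your proposal is not a complete proof, and you acknowledge this yourself. The setup is fine---choosing $f\in I$ of minimal length $k$, noting $k\geq 2$, and assuming $k\geq 3$ for contradiction---but both of the strategies you sketch for the crucial reduction step have genuine gaps.

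The first strategy, showing $B=\prod_{i=2}^{k}(1+t_i)\in I$, is based on a fallacy: the fact that every term of $B$ occurs as a term in \emph{some} element of $I$ does not imply that $B$ is a tropical sum of elements of $I$, because those elements carry extra terms which will not cancel (there is no subtraction). Concretely, for $f=1+x+y$ one checks that $B=1+x+y+xy$ is \emph{not} in the ideal generated by $f$ in $\BB[x^{\pm 1},y^{\pm 1}]$: any $\sum_j m_j f$ that contains the term $xy$ must also contain at least one of $x^2,\,y^2,\,x^2y,\,xy^2$. So this route cannot succeed without an additional idea. The second strategy---passing to a prime congruence $P\supseteq Bend(I)$ and producing a binomial in $\polyFromBend{P}$---gives no mechanism for descending that binomial into $I$, and in the paper's logical order this proposition is used to prove that $Bend(I)$ is prime (Proposition~\ref{prop: Iprime-bendPrime}), so leaning on the congruence side risks circularity.

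The idea you are missing is a single algebraic identity that does the term reduction directly. Write $f=a+b+c$ with $a,b$ single terms and $c$ the sum of the remaining terms. In any idempotent semiring one has
\[
(a+b+c)(ab+ac+bc)=(a+b)(a+c)(b+c),
\]
so the right-hand side lies in $I$. Primeness of $I$ then forces one of the three factors into $I$: either the binomial $a+b$, or one of $a+c$, $b+c$, each of which has strictly fewer terms than $f$. Iterating yields a binomial. This is exactly where primeness of $I$ (not of $Bend(I)$) is used, and it replaces the unavailable subtraction by a multiplicative factorization.
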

\begin{proof}
Consider the polynomial $f \in I$, and write it as $f = a + b + c$, where $a, b$ are terms and $c = m_1 + \dots + m_n$ and $m_i$ are terms. If $n = 0$ then we are done. Assume $n > 0$ and observe that $(a+b+c)(ab + bc + ac) = (a + b)(a+c)(b+c)$ and since $I$ is prime then one of $a + b, a+c$ and $b+c$ is in $I$. If $a+b \in I$ we are done. Otherwise, without loss of generality let $a+c \in I$. We can now apply the same argument to $a + c$, noticing it has one less term than $f$. %$a+c = a + b_1 + c_1$, where $b_1 = m_n$ and $c = m_1 + \dots + m_{n-1}$
Iterating the procedure we reduce to the case $n = 0$. 
\end{proof}

\begin{proposition}\label{prop: Iprime-bendPrime}
Let $I$ be a non-zero prime ideal of $\tLaur$. Then $Bend(I)$ is a prime congruence. 
\end{proposition}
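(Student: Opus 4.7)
The plan is to invoke Corollary~\ref{cor: prop_prime}(2), which, since every element of $\tLaur$ is a sum of invertible terms, reduces the statement ``$Bend(I)$ is prime'' to ``$\tLaur/Bend(I)$ is totally ordered''. Because every element of this idempotent quotient is a sum of term-classes, this in turn reduces to pairwise comparability of two term-classes: for any two distinct terms $m_1, m_2 \in \tLaur$, at least one of the bend pairs $(m_1 + m_2, m_1)$ or $(m_1 + m_2, m_2)$ must lie in $Bend(I)$. The initial input is a binomial $a + b \in I$ provided by Proposition~\ref{prop: prime_has_binom}; transitivity applied to $bend(a+b) \subseteq Bend(I)$ gives $(a, b) \in Bend(I)$, and twisting by $(m, 0)$ via Proposition~\ref{prop: closedTwProd} yields $(ma, mb) \in Bend(I)$ for every term $m$.

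The central computation then examines the polynomial $(m_1+m_2)(a+b) = m_1 a + m_2 a + m_1 b + m_2 b \in I$, treating the generic case in which these four monomials are distinct (degenerate cases handled separately). I would iteratively apply the factorization identity $(\alpha+\beta+\gamma)(\alpha\beta+\beta\gamma+\alpha\gamma) = (\alpha+\beta)(\alpha+\gamma)(\beta+\gamma)$ of Proposition~\ref{prop: prime_has_binom}, combined with primality of $I$, to extract a binomial in $I$ formed from two of the four terms. Two types of outcomes are informative: first, a ``collapse'' binomial $(m_1+m_2)a$ or $(m_1+m_2)b \in I$, which after division by the invertible term gives $m_1 + m_2 \in I$ and so places both of the desired bend pairs in $Bend(I)$; second, a ``cross'' binomial $m_1 a + m_2 b \in I$ (or $m_1 b + m_2 a \in I$) yielding $(m_1 a, m_2 b) \in Bend(I)$, which when combined with $(m_1 a, m_1 b) \in Bend(I)$ and transitivity produces $(m_1 b, m_2 b) \in Bend(I)$; twisting by $(b^{-1}, 0)$ then gives $(m_1, m_2) \in Bend(I)$, so $m_1$ and $m_2$ are identified and in particular comparable.

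The principal obstacle is that the two ``trivial'' binomials $m_1(a+b)$ and $m_2(a+b)$ always lie in $I$, so primality's ``at least one'' conclusion can be satisfied vacuously by them and yield no new information at an iteration step. To address this I would choose the three-way splits at each iteration so that neither trivial binomial appears as one of the three options produced by the factorization identity---roughly, by pairing the two ``parameter'' terms across both the $m_i$- and $\{a,b\}$-indices---and iterate on the resulting three-term polynomials until an informative binomial is forced into $I$. The degenerate cases, where some of the four monomials have coinciding exponent vectors, are handled separately by observing that such coincidences shorten the four-term polynomial to one whose bend relations already deliver the desired comparability of $m_1$ and $m_2$.
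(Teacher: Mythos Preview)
Your overall strategy matches the paper's: reduce via Corollary~\ref{cor: prop_prime}(2) to showing the quotient is totally ordered, take a binomial $a+b\in I$ from Proposition~\ref{prop: prime_has_binom}, and for arbitrary terms $m_1,m_2$ exploit $(m_1+m_2)(a+b)\in I$ together with $(a,b)\in Bend(I)$.

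The gap is in your iteration down to binomials. You correctly flag the obstacle, but the proposed fix---choosing three-way splits so that no trivial binomial appears among the factors---cannot work past the first step. At the four-term stage you may indeed group so that the lone binomial factor is a cross or collapse pair. But if primality instead hands you one of the two three-term factors, there is no further choice: the three binomial factors of \emph{any} three-element subset of $\{m_1a,\,m_2a,\,m_1b,\,m_2b\}$ are exactly one collapse pair, one cross pair, and one trivial pair $m_i(a+b)$. Since the trivial pair already lies in $I$, the primality conclusion at this level is vacuous and your iteration stalls permanently.

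The missing idea is that the three-term polynomial you obtain after one step is already enough---you should read off its bend relations rather than factor again. If, say, $g=am_1+bm_1+am_2\in I$, then $bend(g)\subseteq Bend(I)$ yields $am_1+am_2\sim am_1+bm_1$; combining with $(a,b)\in Bend(I)$ collapses the right side to $am_1$, so $(am_1+am_2,\,am_1)\in Bend(I)$, and dividing by the invertible term $a$ gives $(m_1+m_2,\,m_1)\in Bend(I)$. This is precisely the paper's argument; it reaches such a three-term element in one stroke via the factorization
\[
\bigl((a+b)m_1+am_2\bigr)\bigl((a+b)m_2+bm_1\bigr)=(a+b)\bigl((a+b)m_1m_2+bm_1^2+am_2^2\bigr)\in I,
\]
so that primality forces one of the two three-term factors into $I$ directly. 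Your first iteration step (with a non-trivial grouping) would also land on a usable three-term element; you simply did not notice that it already suffices.
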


\begin{proof}
%In view of Proposition~\ref{prop: closed_ideals_corresp} assume $I \subseteq \tLaur$. 
To show that $Bend(I)$ is a prime congruence it is enough to check that the quotient by it is totally ordered in view of Proposition~\ref{cor: prop_prime} (2). In other words, we need to see that for any two terms $m_1$ and $m_2$ one of the pairs $(m_1 + m_2, m_1)$ and $(m_1 + m_2, m_2)$ is in $Bend(I)$.

Let $f = a+b$ be a binomial in $I$, which exists by Proposition~\ref{prop: prime_has_binom} and hence $(a, b) \in Bend(I)$. Now let $m_1$ and $m_2$ be two terms and consider the product
$$((a+b)m_1 + am_2)((a+b)m_2 + b m_1) = (a+b)((a+b)m_1m_2 + b m_1^2 + a m_2^2) \in I.$$
Since $I$ is prime, then one of $(a+b)m_1 + am_2$ and $(a+b)m_2 + b m_1$ is in $I$. Without loss of generality $(a+b)m_1 + am_2 \in I$ and so $(am_1 + am_2, am_1 + bm_1) \in Bend(I)$. But since $(a,b) \in Bend(I)$ by assumption, we have that $(am_1 + bm_1, am_1) \in Bend(I)$ and we conclude by transitivity that $(am_1 + am_2, am_1) \in Bend(I)$. Since $a$ is a non-zero Laurent term it is invertible and so $(m_1 + m_2, m_1) \in Bend(I)$.
%As in the proof of Proposition~\ref{prop: PeqBendIP} we can conclude that $(m_1+m_2, m_1) \in Bend(I)$.
\end{proof}

The converse is not necessarily true as shown in the following example.
\begin{example} 
Consider the ideal $I \subseteq \rma[x^{\pm 1}, y^{\pm 1}]$ given as the intersection of the set of all polynomials not homogeneous in $x$ and the set of all polynomials not homogeneous in $y$. Observe that $I$ is not a prime ideal because $(x+xy)(y+xy) \in I$ but neither of the factors is an element of $I$.
On the other hand, since $f = x+y$ and $g = x+y^2$ are both elements of $I$, then $(x, y), (y,1) \in Bend(I)$. Thus $$\rma[x^{\pm 1}, y^{\pm 1}]/Bend(I) \cong \rma,$$ and since $\rma$ is a totally ordered semifield by Corollary~\ref{cor: prop_prime} we conclude that $Bend(I)$ is a prime congruence.
\end{example}

We can now give a characterization of the $\closed$ prime ideals of $\tLaur$ and identify the tropical ideals among them. 

\begin{lemma}\label{lem: trop-condition}
    %Let $P$ be a prime congruence on $\tLaur$ and let $m_1 = c_1{\pmb x_1}^{\pmb u_1}$, and $m_2 = c_1{\pmb x_2}^{\pmb u_2}$ in $\tLaur$, such that ${\pmb u_1} \neq {\pmb u_2}$ and in $\tLaur/P$ we have that $m_1 = m_2$ and $cm_1 > 1$  for all $c \in \RR$, then $\polyFromBend{P}$ is not a tropical ideal.
    Let $P$ be a prime congruence on $\tLaur$ and let $m_1 = c_1{\pmb x_1}^{\pmb u_1}$, and $m_2 = c_1{\pmb x_2}^{\pmb u_2}$ be monomials in $\tLaur$, such that ${\pmb u_1} \neq {\pmb u_2}$. If we have that $m_1 \sim_P m_2$ and $cm_1 >_P 1$  for all $c \in \RR$, then $\polyFromBend{P}$ is not a tropical ideal.
\end{lemma}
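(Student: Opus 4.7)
The plan is to exhibit two polynomials $f, g \in \polyFromBend{P}$ that share the coefficient $c_1$ on the monomial $\bx^{\bu_1}$, and then show that no eliminator $h \in \polyFromBend{P}$ can satisfy the monomial elimination axiom of Definition~\ref{def: trop_ideals}. All verifications happen in the quotient $\tLaur/P$, where we repeatedly use the idempotency $\bar a + \bar a = \bar a$, the hypothesis $\overline{m_1} = \overline{m_2}$, and the fact that this common image dominates every invertible element of $\TT^\times$.

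First I would set $f := m_1 + m_2$. Since $\overline{m_1} = \overline{m_2}$, idempotency gives $\bar f = \overline{m_1}$, so both bend relations of $f$ lie in $P$ and $f \in \polyFromBend{P}$. Next fix any invertible scalar $t \in \TT^\times$ and put $g := m_1 + m_2 + t$. The hypothesis ``$cm_1 >_P 1$ for all $c \in \RR$'' rescales immediately to $\mu \bx^{\bu_i} >_P s$ for every $\mu, s \in \TT^\times$ and $i \in \{1,2\}$ (replace $c$ by $s^{-1}\mu c_i^{-1}$ and use $m_1 \sim_P m_2$). Consequently $\bar g = \overline{m_1} + \overline{m_1} + \bar t = \overline{m_1}$, and the same simplification shows that each of the three one-term deletions of $g$ also reduces to $\overline{m_1}$. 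All bend relations of $g$ therefore lie in $P$, whence $g \in \polyFromBend{P}$.

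Next I would apply the elimination axiom at $\pmb u := \bu_1$: since $c_{\bu_1}(f) = c_{\bu_1}(g) = c_1$, any eliminator $h$ must satisfy $c_{\bu_1}(h) = 0_\TT$ and $c_{\bu_2}(h) \leq c_2$, while the strict inequality $c_{\pmb 0}(f) = 0_\TT \neq t = c_{\pmb 0}(g)$ forces $c_{\pmb 0}(h) = t$; all other coordinates vanish. Thus necessarily $h = t + \mu \bx^{\bu_2}$ for some $\mu \in \TT$ with $\mu \leq c_2$.

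Finally I would rule out both possibilities for $\mu$. If $\mu = 0_\TT$ then $h = t$, and the bend relation $(t, 0_\TT) \in P$ would force $\bar t = \overline{0_\TT}$; invertibility of $t$ then collapses $P$ to the improper congruence, contradicting primality. If $\mu \neq 0_\TT$, the rescaling above yields $\overline{\mu \bx^{\bu_2}} >_P \bar t$, so $\bar h = \overline{\mu \bx^{\bu_2}} \neq \bar t$ and therefore $(h, t) \notin P$, giving $h \notin \polyFromBend{P}$. No eliminator exists, so $\polyFromBend{P}$ fails the tropical ideal axiom. The only substantive step is the rescaling that upgrades the single hypothesis $cm_1 >_P 1_\TT$ to $\mu \bx^{\bu_i} >_P s$ for all $\mu, s \in \TT^\times$; once this is in hand the rest is elementary bookkeeping. (In the Laurent setting one may first multiply $f$ and $g$ by a high enough monomial to invoke a polynomial version of the axiom via Proposition~\ref{prop: trop_ideals_corresp}, but this is harmless.)
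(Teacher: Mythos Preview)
Your argument is correct and follows essentially the same strategy as the paper: produce two elements of $\polyFromBend{P}$ that agree on the $\bx^{\bu_1}$-coefficient, apply the monomial elimination axiom, and check that any candidate eliminator has a unique $P$-maximal term and hence cannot lie in $\polyFromBend{P}$. The only difference is cosmetic: the paper uses the pair $m_1+m_2+m_3$ and $m_1+m_2+m_3^2$ with a non-constant term $m_3<_P 1$ (e.g.\ $m_3=m_1^{-1}$), while you use $m_1+m_2$ and $m_1+m_2+t$ with a scalar $t\in\TT^\times$; the eliminator analysis is essentially identical in both setups.
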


\begin{proof}
    Indeed, assume for contradiction that $\polyFromBend{P}$ is tropical. Let $m_3$ be a term such that in we have $1 >_P m_3$. Note that such $m_3$ exists -- for example $m_1^{-1}$ -- and it must have a distinct exponent from $m_1$ and $m_2$. Now the polynomials $m_1 + m_2 + m_3$ and $m_1 + m_2 + m_3^2$
   attain their maximum twice at $P$ hence they are in $\polyFromBend{P}$. By the monomial elimination axiom of tropical ideals we can eliminate $m_1$, but the resulting polynomial $c m_2 + m_3 + m_3^2$ can not attain its maximum twice at $P$ as $m_3 >_P m_3^2$ and we either have $c = 0$ or $c m_2 >_P m_3$.     
\end{proof}

\begin{remark}\label{rmk: higher-rank-primes}
    A non-minimal prime congruence $P$ whose matrix has more than one row is not finitely generated. Moreover, the variety of $P$ is not determined by finitely many pairs in $P$ as shown in \cite[Example 6.3.4]{M16}. Note that for any $P$, by definition $\VV(P) = \VV(\polyFromBend{P})$. Thus the ideal $\polyFromBend{P}$ corresponding to a prime $P$ with rank greater than one is not tropical because it will not have a finite tropical basis. 
\end{remark}

\begin{proof}[Proof of Theorem \ref{prop: bendPrime-iff-Iprime}]
For (i) if $I$ is a non-zero closed prime then by Proposition~\ref{prop: Iprime-bendPrime} $Bend(I)$ is a prime congruence, and since $I$ is closed we have that $I = \polyFromBend{Bend(I)}$. As it was explained in Example~\ref{ex:I_P} the zero ideal can be written as $\polyFromBend{P}$ for any minimal prime $P$. This also proves one direction of (ii) and the other follows from Proposition~\ref{prop: PeqBendIP}.
For (iii) again by Proposition~\ref{prop: Iprime-bendPrime} we have that for a non-zero closed prime $I$ the congruence $Bend(I)$ is prime and for a non-minimal prime congruence $P$ the ideal $\polyFromBend{P}$ is prime by Proposition~\ref{prop: bendPrime-Iprime}, non-zero by (ii) and closed by Remark~\ref{rem:closed}. Moreover, for a closed ideal $I$ we have that $I = \polyFromBend{Bend(I)}$ and for a non-minimal prime $P$ by Proposition~\ref{prop: PeqBendIP} we have that $P = Bend(\polyFromBend{P})$, showing that the two maps are indeed inverses of each other.

For (iv) we have already seen in Example~\ref{ex:I_P} (1) that if $P$ is a geometric prime then $\polyFromBend{P}$ is tropical. We will show that if $P$ is not geometric, then $\polyFromBend{P}$ is not tropical. If the defining matrix $U$ of $P$ consists of multiple rows, then by Remark~\ref{rmk: higher-rank-primes} the ideal $\polyFromBend{P}$ is not tropical. Remains to consider non-geometric primes whose matrices have a single row. Recall that a single row matrix corresponds to a geometric prime if the $(1, 1)$ entry is non-zero and let us assume that the $(1,1)$ entry of $U$ is zero, or equivalently that $P$ identifies the elements of $\TT$ with $1$. By Proposition~\ref{prop: prime_has_binom} there is a binomial $m_1 + m_2 \in \polyFromBend{P}$. By multiplying both with a monomial whose exponent vector is a sufficiently large multiple of the first row of $U$ we can assume that $m_1 \sim_P m_2 >_P 1$, and as we assumed that $P$ identifies $\TT$ with $1$ we also have that $c m_1 >_P 1$ for all $c \in \RR$. Hence we can apply Lemma~\ref{lem: trop-condition}, and conclude that $\polyFromBend{P}$ is not a tropical ideal.
\end{proof}

\subsection{Varieties of congruences and ideals}
$ \ $\par\medskip
% \noindent Combining Proposition~\ref{prop: bendPrime-Iprime} and Proposition~\ref{prop: Iprime-bendPrime} we can conclude the following.
We use the results of the previous section to describe the varieties of prime ideals.

\begin{proposition}
Let $I$ be a prime ideal of $\tLaur$ or $\tPoly$ then $\VV(I)$ is at most a point.
\end{proposition}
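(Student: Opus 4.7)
The plan is to reduce the claim to Remark~\ref{rem: V(prime_cong)}, which shows that the variety of a prime congruence contains at most one point. The key observation, immediate from the definitions in the preliminaries, is that $\VV(I)=\VV(Bend(I))$ for any ideal $I$: a geometric prime congruence contains all bend relations of every $f\in I$ if and only if it contains the congruence they generate. Thus it suffices to show that $Bend(I)$ is a prime congruence whenever $I$ is a non-zero prime ideal.

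For $I\subseteq\tLaur$ this is exactly Proposition~\ref{prop: Iprime-bendPrime}, and Remark~\ref{rem: V(prime_cong)} gives the conclusion at once. For $I\subseteq\tPoly$ I would split into two cases. If $I$ contains a single-term polynomial $c\pmb{x}^{\pmb u}$, that polynomial has a unique maximum at every point and hence never tropically vanishes, so $\VV(I)=\emptyset$. Otherwise $I$ contains no monomial; by Proposition~\ref{prop: prime_has_binom}, whose proof is purely formal and applies verbatim in $\tPoly$, $I$ contains a binomial, and stripping off any common monomial factor---permissible because $I$ is prime and contains no monomial---yields a binomial $\alpha+\beta\in I$ whose two terms have disjoint supports in the variables.

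The central computation from Proposition~\ref{prop: Iprime-bendPrime} still goes through: for arbitrary terms $m_1,m_2\in\tPoly$, the identity
\[
\bigl((\alpha+\beta)m_1+\alpha m_2\bigr)\bigl((\alpha+\beta)m_2+\beta m_1\bigr)=(\alpha+\beta)\bigl((\alpha+\beta)m_1m_2+\beta m_1^2+\alpha m_2^2\bigr)\in I,
\]
together with primeness of $I$, forces one of the two left-hand factors into $I$. Chasing bend relations (and using $(\alpha,\beta)\in Bend(I)$ since $\alpha+\beta\in I$) produces the pair $(\alpha m_1+\alpha m_2,\alpha m_1)\in Bend(I)$ or its symmetric counterpart with $\beta$. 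In $\tLaur$ one divides by the invertible term $\alpha$ to conclude $(m_1+m_2,m_1)\in Bend(I)$ and thus that $\tPoly/Bend(I)$ is totally ordered; in $\tPoly$ this cancellation is unavailable, but it suffices to rule out two distinct points in $\VV(I)$ directly. Given such $a,b\in\VV(I)\subseteq\TT^n$, pick a coordinate $i$ with $a_i\ne b_i$, a constant $c\in\RR$ strictly between $a_i$ and $b_i$, and apply the bend relation above with $m_1=x_i$ and $m_2=c$: both alternatives require a comparison $m_1\ge m_2$ or $m_1\le m_2$ to hold simultaneously at $a$ and at $b$, but the choice of $c$ makes these inequalities reverse between $a$ and $b$, giving the contradiction.

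The main technical obstacle is the case where the binomial $\alpha+\beta$ degenerates at $a$ or $b$, meaning $\alpha$ (and hence also $\beta$, since $\alpha(p)=\beta(p)$ for every $p\in\VV(I)$) evaluates to $0_{\TT}$ there; the derived comparison is then vacuous at that point. I expect the coprimality of $\alpha,\beta$ together with the forced equality $\alpha(p)=\beta(p)$ at each $p\in\VV(I)$ to exclude this possibility after a careful choice of the binomial---if $a$ has a $-\infty$ coordinate in the support of $\alpha$ then the same coordinate must lie in the support of $\beta$, contradicting disjoint support. Tying up this case cleanly is the heart of the argument.
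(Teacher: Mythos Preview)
Your $\tLaur$ argument is correct and coincides with the paper's. For $\tPoly$, however, the paper takes a different and much shorter route, and both branches of your case split contain genuine gaps.

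In your monomial case, the assertion that a single term $c\pmb{x}^{\pmb u}$ ``never tropically vanishes'' is false on $\TT^n$: under the convention the paper adopts from \cite{MS} (equivalently, via the bend-relation description, since $bend(c\pmb{x}^{\pmb u})=\{(c\pmb{x}^{\pmb u},0)\}$), a monomial vanishes at every point $P$ with $P_j=-\infty$ for some $j$ in its support. So $\VV(I)$ need not be empty here. The paper instead observes that a prime ideal containing $c\pmb{x}^{\pmb u}$ must contain one of the variables $x_j$, whence $\VV(I)\subseteq\{x_j=-\infty\}\cong\TT^{n-1}$, and reduces to a prime ideal in fewer variables. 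In your no-monomial case, the fix you propose for the degeneration problem does not work: from $\alpha(a)=-\infty$ one only deduces $\beta(a)=-\infty$, which says that $a$ has \emph{some} $-\infty$ coordinate in $supp(\alpha)$ and \emph{some} (possibly different) $-\infty$ coordinate in $supp(\beta)$; it does not force the same coordinate into both supports. Already $\alpha=x_1$, $\beta=x_2$, $a=(-\infty,-\infty,\dots)$ defeats this reasoning, with disjoint supports and $\alpha(a)=\beta(a)=-\infty$. The paper sidesteps all of this by passing to $I\tLaur$, which is a non-zero prime ideal of $\tLaur$ by the argument of Proposition~\ref{prop: trop_ideals_corresp}, and then invoking the already-established $\tLaur$ case.
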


\begin{proof}
For the case of $\tLaur$ from Proposition~\ref{prop: Iprime-bendPrime} we know that $Bend(I)$ is prime and so $\VV(Bend(I))$ consists of at most a point by Remark~\ref{rem: V(prime_cong)}. The theorem follows as $\VV(I) = \VV(Bend(I))$ which is true by definition of the sets. For $\tPoly$, if $I$ is a prime ideal containing a monomial, then it also contains at least one of the variables appearing in that monomial hence its tropical vanishing locus is in bijection with that of a tropical prime ideal in less variables. If $I$ does not contain monomials then the statement follows from Proposition~\ref{prop: trop_ideals_corresp} and the $\tLaur$ case.
\end{proof}

\begin{example} We give an example of a tropical ideal whose variety is a line, explicitly showing it is not prime. Consider the ideal $J = (x-y) \subseteq k[x,y]$, where $k$ is a trivially valued field. Let $I = trop(J) \subseteq \mb[x,y]$, which is a realizable tropical ideal. Notice that $(x+y+1)(x+y+xy)=(x+y)(x+y+xy+1) \in I$, but neither $(x+y+1)$ nor $(x+y+xy)$ are in $I$, hence $I$ is not prime. %\orange{kalina sanity check $f = 1 + y/x$, let $P_1 = (1, 1)$ and $P_2 = (2, 2)$. Then $f(P_1) = f(P_2) = 0$. $f_1 = f+ t^cx^ay^b$, $f_2 = f+ t^{-c}x^{-a}y^{-b}$. Since we want $f_1(P_2) > f(P_2)$ and $f_2(P_1) > f(P_1)$ this means $c+2a+2b > 0$ and $-c-a-b>0$, so $m = t^{-3}xy$}
\end{example}

%\red{Everything is now only stated for Laurent, the remark below summarizes the polynomial case.}

\begin{remark}\label{rm: noStrNull}
Not every $\closed$ ideal of $\tLaur$ satisfies the strong Nullstellensatz, that is, it is not true that $f$ is in every $\closed$ prime containing a $\closed$ ideal $I$ if and only if some power of $f$ is in $I$.
\end{remark}

\begin{proof}[Proof of Remark~\ref{rm: noStrNull}]
Let $I$ be the closure of the ideal generated by the polynomials
$(1 + x + x^2)y + z, (1 + x + x^2)y + zx, (1 + x + x^2)y + zx^2, (1 + x + x^2)z + y, (1 + x + x^2)z + yx, (1 + x + x^2)z + yx^2$. From the first three polynomials we conclude that 
$$(1 + x + x^2)y \sim (1 + x + x^2)y + (1 + x + x^2)z \in Bend(I).$$
Similarly, from the other three generators we obtain that
$$(1 + x + x^2)z \sim (1 + x + x^2)y + (1 + x + x^2)z \in Bend(I).$$
By transitivity we obtain that the pair $(1 + x + x^2)y \sim (1 + x + x^2)z$ is also in $Bend(I)$ and so $(1 + x + x^2, 0) (y, z)$ in $Bend(I)$. Every prime congruence $P$ over $Bend(I)$ contains $(1 + x + x^2, 0) (y, z)$ hence it contains one of the factors. Assume $(1 + x + x^2,0) \in P$. Since $P$ is a prime congruence on $\tLaur$, in the quotient of $\tLaur$ by $P$ each polynomial is congruent to its leading term; denote the leading term of $1 + x + x^2$ by $m$. So we have $1 + x + x^2 \sim_P m \sim_P 0$. Multiplying by $m^{-1}$ tells us that that $(1_\TT, 0_\TT) \in P$, which makes $P$ not proper and hence not prime. Hence $(y, z)$ is in every prime congruence containing $Bend(I)$. Moreover, $y+z$ is in every $\closed$ prime containing $I$. However, no power of $y + z$ is in $I$. 
\end{proof}

\begin{remark}
   Maclagan and Rinc\'on have announced that tropical ideals satisfy the strong Nullstellensatz; at time of writing, the result is still unpublished.
\end{remark}

\subsection{Dimension}
%\noindent{\color{red} actually prove the below by providing the example of what $I_P$ looks like when P is rank 2 }

In this section we will compute the dimension of the ``coordinate semiring'', i.e., the semiring $\tLaur/Bend(I)$. 

In \cite[Theorem~3.11]{MR16} the authors prove that tropical ideals satisfy the ascending chain condition. Part (iv) of Theorem~\ref{prop: bendPrime-iff-Iprime} implies that in a chain of tropical ideals there would be at most one prime ideal and in a chain of prime ideals there would be at most one tropical ideal. Thus we cannot define Krull dimension in terms of chains of prime tropical ideals. In what follows, Krull dimension is defined in terms of prime congruences as stated in Definition~\ref{def:Krull}.\\

By the Structure Theorem for tropical varieties \cite[Theorem 3.3.5]{MS} the set-theoretic tropicalization $trop(X)$ of a $d$-dimensional subvariety $X$ of $(K^*)^n$ or $K^n$ is a polyhedral complex of pure dimension $d$. Here we relate the dimension of $X$, or rather $trop(X)$, to the Krull dimension of the coordinate ring of the corresponding affine tropical scheme.%\footnote{A version of this result stated for realizable tropical ideals first appeared in the second author's thesis \cite{M16}.}

\begin{theorem}\label{thm: dim} Let $I$ be a tropical ideal in $\tPoly$
%, not necessarily realizable, 
whose variety $X = \VV(I)$ has dimension $d$. Then 
$$\dim \tPoly/Bend(I) = d + 1.$$
\end{theorem}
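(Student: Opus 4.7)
The plan is to use the correspondence between prime congruences of $\tPoly/Bend(I)$ and prime congruences of $\tPoly$ containing $Bend(I)$, and then to apply the matrix classification of prime congruences recalled after Proposition~\ref{prop: min_prime}. Each such prime $P$ is defined by an admissible matrix $U_P$ with $\dim(\tPoly/P) = r(U_P)$, so in any chain $P_0 \subsetneq P_1 \subsetneq \cdots \subsetneq P_k$ of primes containing $Bend(I)$ the associated ranks strictly decrease along the inclusion. A short verification shows that removing a single row from an admissible matrix defining a prime over $Bend(I)$ yields another such prime, so the longest chain is realized with consecutive ranks differing by exactly one. Thus $\dim \tPoly/Bend(I)$ equals $\max r(U_P) - \min r(U_P)$ over primes $P \supseteq Bend(I)$. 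The minimum rank is $0$, attained by the ``collapse'' congruence that identifies every pair of nonzero elements of $\tPoly$ (with quotient $\mb$); this is prime and always contains $Bend(I)$, since every $f \in I$ and every $f_{\hat \imath}$ are nonzero polynomials.

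For the lower bound, I invoke the structure theorem \cite[Theorem 3.3.5]{MS} to realize $X = \VV(I)$ as a polyhedral complex of pure dimension $d$. Pick a top-dimensional face $F$, a point $p = (p_1,\ldots,p_n)$ in its relative interior, and a basis $v_1,\ldots,v_d$ for the affine direction space of $F$. Form the admissible $(d+1)\times(n+1)$ matrix $U$ with rows $(1,p_1,\ldots,p_n),(0,v_1),\ldots,(0,v_d)$, and let $P$ denote the prime it defines. The key point is that $P \supseteq Bend(I)$: since $p + \varepsilon v_1 + \varepsilon^2 v_2 + \cdots + \varepsilon^d v_d \in F \subseteq X$ for every sufficiently small $\varepsilon > 0$, each $f \in I$ tropically vanishes at these nearby points, and passing to the lexicographic limit encoded by $U$ forces $bend(f) \subseteq P$ for every $f \in I$. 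Successively deleting the bottom row of $U$ produces primes over $Bend(I)$ of ranks $d+1, d, \ldots, 1$, and appending the rank-$0$ collapse prime at the top yields a chain with $d+1$ strict inclusions, so $\dim \tPoly/Bend(I) \geq d+1$.

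For the upper bound, let $P \supseteq Bend(I)$ be a prime of rank $k$ with defining matrix $U$. After rescaling, the top row of $U$ is either $(1,p_1,\ldots,p_n)$ (giving a genuine point $p \in \RR^n$) or it begins with $0$ (in which case $P$ collapses $\TT$ into $\mb$). The first case is the principal one: the rank-$1$ sub-prime $P_p$ contained in $P$ also contains $Bend(I)$, so $p \in \VV(I) = X$ by Theorem~\ref{prop: bendPrime-iff-Iprime}. The remaining rows $v_1,\ldots,v_{k-1}$ define lex-infinitesimal directions, and $Bend(I) \subseteq P$ forces every $f \in I$ to tropically vanish under the lex order determined by $U$. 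Invoking the finite tropical basis property of tropical ideals \cite{MR16}, this infinite family of conditions collapses to finitely many piecewise-linear inequalities, amounting to the statement that each affine flag $p + \langle v_1,\ldots,v_j\rangle$ with $j \leq k-1$ is locally contained in $X$ near $p$. Since $X$ has dimension $d$, the $(k-1)$-dimensional affine span $p + \langle v_1,\ldots,v_{k-1}\rangle$ can have dimension at most $d$, forcing $k - 1 \leq d$. The ``collapsed'' case reduces to an analogous argument inside the appropriate $\mb$-subsemiring, whose associated tropical variety has the same dimension.

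The principal obstacle is the rigorous upper bound: translating the abstract semiring containment $Bend(I) \subseteq P$ into the geometric statement that the directions $v_1,\ldots,v_{k-1}$ genuinely lie in the affine hull of a local cell of $X$. Finiteness of a tropical basis is essential here, since it turns the infinite family of lex-vanishing conditions into a finite set of honest polyhedral constraints that can be realized by actual, not merely formal, perturbations of $p$ inside $X$. A secondary technical subtlety lies in handling primes whose defining matrix begins with a $0$, which requires a separate reduction to the relevant $\mb$-subsemiring but does not affect the dimension count since the underlying tropical variety is unchanged.
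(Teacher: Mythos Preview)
Your overall strategy coincides with the paper's: bound $\dim \tPoly/Bend(I)$ from below by exhibiting a rank-$(d+1)$ prime built from a top-dimensional face of $X$ (this is Proposition~\ref{prop: lower-bound}), and from above by showing any prime over $Bend(I)$ produces affinely independent points inside a single face of $X$ (Proposition~\ref{prop: upper-bound}). The execution, however, has genuine gaps. In the lower bound you declare the matrix $U$ with rows $(1,p),(0,v_1),\ldots,(0,v_d)$ to be admissible without checking non-redundancy. Linear independence over $\RR$ is not sufficient: if, say, $n=2$ and $v_1=(1,\sqrt 2)$, then rows one and two already totally order $\RR\times\ZZ^2$, so any third row is redundant and the prime has rank $2$ rather than $3$. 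What is needed is that the $v_i$ can be chosen rational, and this is available precisely because varieties of tropical ideals are $\RR$-rational polyhedral complexes---a result of \cite{MR20} (see Remark~\ref{rmk:trop-ideals-facts}), not of the structure theorem \cite[Theorem 3.3.5]{MS}, which treats only the realizable case. This is why the paper's construction insists on rational differences $u_i-\omega$.

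In the upper bound two points need repair. First, the rank-$1$ prime $P_p$ defined by the top row \emph{contains} $P$, not the reverse; it is the chain $Bend(I)\subseteq P\subseteq P_p$ that yields $p\in X$. Second, the assertion that ``each affine flag $p+\langle v_1,\ldots,v_j\rangle$ is locally contained in $X$'' is false as stated: for the standard tropical line with $p=(0,0)$ and $v_1=(1,1)$ only the ray $t\geq 0$ lies in $X$. What the argument actually yields---and what the paper uses---is that for suitably small $\epsilon_i$ the specific perturbations $p+\epsilon_1 v_1+\cdots$ lie on a common face of $X$; your invocation of a finite tropical basis \cite{MR16} is exactly the ingredient (left implicit in the paper) that makes the $\epsilon$'s uniform over all of $I$. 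Finally, the collapsed case is not as you describe: passing to $\mb[x_1,\ldots,x_n]$ via $\phi$ does not leave the variety dimension ``unchanged''---the paper invokes \cite[Proposition~2.12]{MR20} to see that $\dim\VV(\phi(I))$ increases, and this increment is precisely offset by the extra row one prepends to $U$ to obtain $U'$.
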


We will give an upper and lower bound for the dimension of this quotient which will imply the statement of the theorem. We first remind the reader the following terminology. 

\begin{definition}
    An $\mathbb{R}$-rational polyhedron is $P = \{x\in \mathbb{R}^n : Ax \leq b\}$, where $A$ is a $d\times n$ matrix with entries in $\mathbb{Q}$ and $b\in \mathbb{R}^d$.
\end{definition}

\begin{remark}\label{rmk:trop-ideals-facts}
In a series of papers \cite{MR14}, \cite{MR16}, \cite{MR20} Maclagan and Rinc\'on prove that the varieties of tropical ideals are finite balanced $\mathbb{R}$-rational polyhedral complexes. Moreover, in \cite[Theorem 1.4]{MR16} they show that the only tropical ideal whose variety is empty is the polynomial semiring $\tPoly$. Thus every non-trivial tropical ideal has a non-empty variety.
\end{remark}

\begin{proposition}\label{prop: lower-bound}
Let $I$ be an ideal in $\tPoly$ such that $X = \VV(I)$ is a finite dimensional $\mathbb{R}$-rational polyhedral complex of dimension $d$. Then 
$$\dim \tPoly/Bend(I) \geq  d + 1.$$
\end{proposition}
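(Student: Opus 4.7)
The plan is to exhibit an explicit chain of $d+2$ prime congruences of $\tPoly$, each containing $Bend(I)$, with $d+1$ strict inclusions; by Definition~\ref{def:Krull} this gives $\dim \tPoly/Bend(I) \geq d+1$.

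First I would pick a top-dimensional cell $F$ of the polyhedral complex $X$, a point $p$ in its relative interior, and a basis $v_1, \ldots, v_d \in \mathbb{Q}^n$ of the linear subspace parallel to the affine hull of $F$ (possible because $F$ is $\mathbb{R}$-rational). For each $0 \leq k \leq d$, let $U_k$ be the admissible $(k+1)\times(n+1)$ matrix with rows $(1, p_1, \ldots, p_n), (0, v_1), \ldots, (0, v_k)$; by \cite[Theorem 1.1]{JM14} this defines a prime congruence $P_k$ of $\tLaur$ of rank $k+1$, which restricts to a prime congruence of $\tPoly$ of the same rank. Also let $P_{-1}$ be the kernel of the semiring morphism $\tPoly \twoheadrightarrow \mb$ sending every nonzero polynomial to $1$; it is prime because its quotient $\mb$ is cancellative and totally ordered. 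These primes form a chain $P_d \subsetneq P_{d-1} \subsetneq \cdots \subsetneq P_0 \subsetneq P_{-1}$: containments are immediate (dropping a row forgets conditions) and strictness follows because the quotient Krull dimensions strictly decrease as $d+1, d, \ldots, 1, 0$. It therefore suffices to verify $Bend(I) \subseteq P_d$, since the remaining primes contain $P_d$ and hence automatically contain $Bend(I)$.

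For the key containment, fix $f = \sum_i t_i \vecx^{u_i} \in I$. Since $\tPoly/P_d$ is totally ordered, $bend(f) \subseteq P_d$ is equivalent to the $P_d$-maximum over the terms of $f$ being attained by at least two indices. The $P_d$-order on terms is the lexicographic comparison of the vectors $(t_i + u_i \cdot p, u_i \cdot v_1, \ldots, u_i \cdot v_d) \in \mathbb{R}^{d+1}$, so the $P_d$-argmax is the terminal set of the nested chain $S_0 \supseteq S_1 \supseteq \cdots \supseteq S_d$ in which $S_0$ is the set of indices realizing $f(p)$ and $S_k = \{i \in S_{k-1} : u_i \cdot v_k = \max_{j \in S_{k-1}} u_j \cdot v_k\}$. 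I would prove $|S_k| \geq 2$ by induction on $k$: the base case uses $p \in \VV(I)$ to force $|S_0| \geq 2$. For the inductive step, let $q_k = p + \sum_{j=1}^{k} \epsilon^j v_j$ with $\epsilon > 0$ sufficiently small; the relative-interior hypothesis ensures $q_k \in F \subseteq \VV(I)$, while the scale hierarchy $\epsilon^{j} \gg \epsilon^{j+1}$ ensures the ordinary argmax of $f$ at $q_k$ coincides with $S_k$. Hence $f$ tropically vanishing at $q_k$ forces $|S_k| \geq 2$, and the case $k = d$ proves the claim.

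The main technical obstacle is the argmax-refinement induction: one must choose $\epsilon$ small enough that simultaneously $q_k$ stays in $F$ and that the lex hierarchy defining $P_d$ is faithfully reflected in the ordinary argmax of $f$ at $q_k$. Both hold for all sufficiently small $\epsilon$ — the first by the interior hypothesis on $p$, and the second because there are only finitely many coefficient differences among the terms of $f$ to dominate. The remaining verifications (admissibility of each $U_k$, strictness of the inclusions, and propagation of containment of $Bend(I)$ upward through the chain) are routine.
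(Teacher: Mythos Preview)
Your proof is correct and follows essentially the same route as the paper: both construct a prime congruence of rank $d+1$ containing $Bend(I)$ via an admissible matrix whose first row is $(1,p)$ for $p$ in the relative interior of a top-dimensional cell and whose remaining rows are rational directions spanning that cell. Your presentation is somewhat more explicit---you write out the full chain of primes (including the collapse-to-$\mb$ prime $P_{-1}$) rather than invoking \cite{JM14} for $\dim\tPoly/P = r(U)$, and your perturbation $q_k = p + \sum_j \epsilon^j v_j$ spells out the lex-argmax refinement that the paper compresses into a single sentence.
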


\begin{proof}
Recall that if $P$ is a prime congruence with defining matrix $U$ then $\dim \tPoly/P $ is equal to the rank of $U$, denoted $r(U)$. We first observe that if $C$ is a congruence on $\tPoly$ then $$\dim \tPoly/C = \dim \tPoly/P,$$ where $P$ is a prime congruence over $C$ of maximal rank, in particular, $P$ is a minimal prime over $C$. Thus, if $P$ is a prime of maximal rank over $Bend(I)$ it suffices to show that $P$ has rank at least $d+1$. \par

We will show that there always exists a prime congruence $P$ of rank $d+1$ %with defining matrix $U$ 
containing $Bend(I)$, such that $P$ has a geometric prime lying over it. Let $\mathcal{F}$ be a maximal cell (of maximal dimension) of the polyhedral complex $X$ and let $\omega$ be a point in the relative interior of $\mathcal{F}$. The affine span of $\mathcal{F}$ has dimension equal to the dimension of $X$ which is $d$. Since $X$ is $\mathbb{R}$-rational there exist  $u_1, \dots, u_d  \in \mathcal{F}$ such that $\omega, u_1, \dots, u_d$ are affine independent and $u_1 - \omega, \dots, u_d - \omega$ are rational.
Now consider the matrix $$U = \begin{pmatrix}
    1 & \omega \\
    
    1 & {u_1} \\
    \vdots & \vdots\\
    1 & {u_d}
\end{pmatrix}.$$
Since $\omega, u_1, \dots, u_d$ are affine independent then the rows of $U$ are linearly independent and thus $r(U) = d+1$. Furthermore, $U$ defines the same order as 
$$U' = \begin{pmatrix}
    1 & \omega \\
    0 & {u_1 - \omega} \\
    \vdots & \vdots\\
    0 & {u_d - \omega}
\end{pmatrix}.$$
Since the  $u_1 - \omega, \dots, u_d - \omega$ are rational and linearly independent, each row of $U'$ further refines the ordering hence $U'$, and consequently $U$, defines a prime of rank $d + 1$.
To see that $P$ contains $Bend(I)$, note that for every $f\in I$ and $v\in \VV(I)$ we have that $f(v)$ attains its maximum twice. Since $U$ orders monomials by evaluating them at the points $\omega, u_1, \dots, u_d \in \VV(I)$ it follows that
    $$bend(f) = \{(f , f_{\hat \imath})\}_{i\in supp(f)} \subseteq P,$$
for all $f \in I$, hence $Bend(I) \subseteq P.$

\end{proof}

% \begin{remark}
%     \orange{The above statement and proof hold if the polyhedral complex contains a $\RR$-rational polyhedron of dimension $d$ contained in the torus. The proof can be augmented to drop the torus condition.}
% \end{remark}

\begin{proposition}\label{prop: upper-bound}
Let $I$ be a tropical ideal in $\tPoly$ such that $X = \VV(I)$ has dimension $d$. Then 
$$\dim \tPoly/Bend(I) \leq  d + 1.$$
\end{proposition}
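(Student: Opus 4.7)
The plan is to show that any prime congruence $P$ with defining admissible matrix $U$ of rank $r$ satisfying $Bend(I) \subseteq P$ has $r \leq d+1$; since $\dim \tPoly/Bend(I)$ equals the maximum rank of such primes, this yields the upper bound. I will treat first the main case in which at least one row of $U$ has a positive first entry. After a sequence of row operations of the form ``subtract a multiple of an earlier row from a later row''---operations that preserve admissibility and the lex order defined by $U$, and hence the congruence $P$---I may assume there is a distinguished row $u_k = (a, w_k)$ with $a > 0$, while every other row has the form $u_j = (0, w_j)$. Linear independence of the rows of $U$ then forces $\{w_j : j \neq k\}$ to be linearly independent in $\RR^n$. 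The single row $u_k$ alone defines the geometric prime $Q_k$ at the point $v_k := w_k/a$, and the containment $P \subseteq Q_k$ (any pair annihilated by every row of $U$ is in particular annihilated by $u_k$) combined with $Bend(I) \subseteq P$ gives $v_k \in \VV(I)$.

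The key step is to produce $r-1$ linearly independent tangent rays to $\VV(I)$ at $v_k$. Fix $j \neq k$ and consider the rank-$2$ prime $P_j$ with rows $u_k$ and $(0, w_j)$; the inclusion $P \subseteq P_j$ is immediate since $P$-equivalence requires every row of $U$ (in particular $u_k$ and $u_j$) to annihilate the pair. For every $\epsilon > 0$, the combination $u_k + \epsilon (0, w_j) = (a, w_k + \epsilon w_j)$ defines a geometric prime $Q_{j,\epsilon}$ at the point $v_k + \epsilon w_j/a$. Any pair annihilated by both rows of $P_j$ is also annihilated by their linear combination, so $P_j \subseteq Q_{j,\epsilon}$. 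Therefore the chain $Bend(I) \subseteq P \subseteq P_j \subseteq Q_{j,\epsilon}$ places $v_k + \epsilon w_j/a$ in $\VV(I)$ for every $\epsilon > 0$, producing a ray in direction $w_j/a$ from $v_k$ lying entirely in $\VV(I)$. Letting $j$ range over the $r-1$ indices other than $k$ yields $r-1$ linearly independent such rays, so the local dimension of $\VV(I)$ at $v_k$ is at least $r-1$, and from $\dim \VV(I) = d$ we conclude $r \leq d+1$.

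The main obstacle I expect to face is the degenerate case in which the first column of $U$ is entirely zero, i.e., $P$ is not contained in any geometric prime. In that situation there is no natural point $v_k \in \VV(I)$ from which to anchor the perturbation. Handling this case will require either a reduction to the previous one via a perturbation $U \rightsquigarrow U_\delta$ whose first column has a small positive entry (together with a verification that $Bend(I)$ remains contained in the perturbed prime), or a separate geometric argument bounding the rank in terms of the recession structure of $\VV(I)$, whose dimension is itself at most $d$.
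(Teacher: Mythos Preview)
Your argument rests on the containments $P\subseteq Q_k$, $P\subseteq P_j$, and $P_j\subseteq Q_{j,\epsilon}$, each justified by the slogan ``a pair annihilated by every row of $U$ is annihilated by any subset or linear combination of rows.'' That slogan is correct for pairs of \emph{monomials}, but membership of a general pair $(f,g)$ in the prime $P$ with matrix $U$ is \emph{not} the conjunction of row-wise conditions: it means that $f$ and $g$ share the same $U$-lex leading term. Containment $P\subseteq Q$ of matrix-defined primes requires that the $Q$-order coarsen the $P$-order, which amounts to $Q$'s matrix being (row-equivalent to) an \emph{initial} block of $P$'s, not an arbitrary subset of rows. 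For a concrete failure, take $U$ with rows $u_1=(1,0,0,0)$, $u_2=(0,1,0,0)$, $u_3=(0,0,1,0)$ on $\TT[x,y,z]$ and let $P_3$ be the prime with rows $u_1,u_3$: then $(xz+y^2,\,xz)\in P$ (after the $u_1$-tie, $u_2$ selects $xz$) while $(xz+y^2,\,xz)\notin P_3$ (after the $u_1$-tie, $u_3$ selects $y^2$), so $P\not\subseteq P_3$. The same reasoning shows $P\not\subseteq Q_k$ whenever $k>1$, and $P_j\not\subseteq Q_{j,\epsilon}$ for every fixed $\epsilon>0$ (choose monomials whose $u_k$-gap is tiny but whose $u_j$-gap is large of the opposite sign). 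Consequently the chain $Bend(I)\subseteq P\subseteq P_j\subseteq Q_{j,\epsilon}$ does not hold, and the claim that $v_k+\epsilon w_j/a\in\VV(I)$ for \emph{all} $\epsilon>0$ is both unproven and false in general, since $\VV(I)$ is a finite polyhedral complex.

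The paper's proof avoids this by never skipping rows: it forms the perturbations $w_i'=w_1+\epsilon_1 w_2+\cdots+\epsilon_{i-1}w_i$ using \emph{all} rows up to $i$, so that for small parameters the single-row order given by $w_i'$ agrees with the lex order on $(w_1,\dots,w_i)$, which is a genuine coarsening of $P$; the associated $r(W)$ points are then affinely independent and lie on a common face of $\VV(I)$, forcing $r(W)\le d+1$. In the degenerate case where the entire first column of $U$ vanishes, the paper does not perturb the first column as you propose; instead it prepends the row $(1,0,\dots,0)$ to $U$ to obtain a prime of rank $r(U)+1$, and reduces to the Boolean-coefficient ideal $\phi(I)$ via \cite[Proposition~2.12]{MR20} to contradict the maximality of $r(U)$.
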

\begin{proof}
We will show that every prime congruence over $Bend(I)$ has rank at most $d+1$. We first see this in the case when every maximal rank prime $P$ has a geometric prime over it. 
%Now assume for contradiction that $P$ is a prime over $Bend(I)$ of rank strictly greater than $d+1$. If $W$ is the defining matrix of $P$ then $r(W) > d+1$. 
Let $W$ be the defining matrix of $P$. Denote the rows of $W$ by $w_1, \dots , w_{r(W)}$. Note that they are linearly independent by definition of the defining matrix of a prime. Consider the vectors $w_1' = w_1,\ w_2' = w_1 + \epsilon_1'w_2,\ \dots ,\ w_{r(W)}' = w_1 + \dots + \epsilon_{r(W)}'w_{r(W)}$ which are also linearly independent. We can scale each of the vectors $w_1', \dots , w_{r(W)}'$ so that the first entry is $1$. Now consider the re-scaled vectors without the first entry and call them $w_1'', \dots, w_{r(W)}''$. Note that the vectors $w_1'', \dots, w_{r(W)}''$ are affine independent and lie on the same face of $X$. 
Since we know that the dimension of $X$ is at most $d$, we know that $r(W) \leq d+1$.
Remains to investigate the case when a maximal rank prime $P$ containing $Bend(I)$ has no geometric prime over it. Denote by $U$ the defining matrix of $P$. If there is no geometric prime over $P$ then the first entry of the first row of $U$ is zero. If the first entry of any other row is not zero, we can add a suitable multiple of this row to the first one. This way we obtain the matrix of a different prime of the same rank, which still contains $Bend(I)$ but has a geometric prime over it. In this case we are done by the previous discussion. %to self: in order for this prime $P$ to contain Bend(I) we need each row to give us the same initial ideal. 

Thus we can assume that the first column of $U$ is the zero vector. Consider the prime $P'$ with matrix $U'$, where
$$U' = \begin{pmatrix}
    1 & {\pmb 0} \\
    {\pmb 0} & U 
\end{pmatrix}.$$
Then there are two cases. First, if $I \subseteq \mb[x_1, \dots, x_n]$, then the prime $P'$ lies above $Bend(I)$ and clearly $r(U')>r(U)$. So $P$ is not a maximal rank prime over $Bend(I)$. %to self: coefficients look like t^0, thus the weight of each is 0 and so is the weight of each monomial. The ordering that P and P' give on B[x..] is the same, but on T[x..] is different. 
If $I \not\subseteq \mb[x_1, \dots, x_n]$, consider the prime $P'$ as before. Since the (1,1) entry of $U'$ is 1, there exists a geometric prime over $P'$. 
% to self: the idea before was that whether we tropicalize wrt the trivial or non-trivial valuation the dimension of the polyhedral complex will stay the same.
We can infer from \cite[Proposition~2.12]{MR20}, that $\dim \VV(I) = \dim \VV(\phi(I)) = d+1$ where the map $\phi: \tPoly \rightarrow \mb[x_1,\dots,x_n]$. Notice that $P'$ is a prime over $Bend(\phi(I))$. Then by the earlier argument $\dim \tPoly/P'$ is at most $d + 1$. Now since $r(U')= r(U) + 1 >r(U)$, then $r(U) < d+1$ and hence $P$ is not a maximal rank prime. So we conclude that if $P$ is a maximal rank prime with matrix $U$, then $r(U) \leq d+1$.
\end{proof}

\begin{proof}[Proof of Theorem~\ref{thm: dim}]
The theorem follows immediately from Proposition~\ref{prop: lower-bound},  Remark~\ref{rmk:trop-ideals-facts}, and Proposition~\ref{prop: upper-bound}. 
\end{proof}

\begin{remark}{\rm
Recently, in \cite{MR20} the authors show that if $I$ is a non-necessarily realizable tropical ideal the dimension of $V(I)$ is equal to the degree of the Hilbert polynomial of $I$. Their result agrees with our computation of the Krull dimension of  $\rma[x_1, \dots, x_n]/Bend(I)$. }
\end{remark}

\section{Appendix}
We give an alternative proof to the result that the only prime tropical ideals are the tropicalizations of points.\par

The following proposition is essentially Theorem 1.1 in \cite{MR14}, which is originally stated for realizable tropical ideals, but is in fact true for all tropical ideals. Here we recall the statement and the proof for the reader's convenience.
\begin{proposition}\label{prop: ideal-bend}
Let $I$ be a homogeneous tropical ideal in $\tPoly$ then $f \in I$ if and only if $bend(f) \subseteq Bend(I)$.
\end{proposition}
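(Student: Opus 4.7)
The forward direction is immediate from the definitions: if $f \in I$, then each pair $(f, f_{\hat\imath})$ with $i \in \mathrm{supp}(f)$ is by construction one of the generators of $Bend(I)$, so $bend(f) \subseteq Bend(I)$.

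For the converse, assume $bend(f) \subseteq Bend(I)$. Since $I$ is homogeneous, $Bend(I)$ splits compatibly with the grading, and we may reduce to the case where $f$ is homogeneous of some degree $d$. The plan is to exploit the hypothesis that $I_{\leq d}$ is the set of vectors of a valuated matroid, which lets us run a tropical-linear-algebra style argument to reconstruct $f$ from the bend relations it satisfies.

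The first step is to give a normal form for pairs of polynomials of bounded degree lying in $Bend(I)$: any such pair can be derived using only the bend relations of elements of $I_{\leq d}$ together with sums, diagonal pairs, and twisted multiplications by monomials, invoking Proposition \ref{prop: closedTwProd} to keep the degrees under control. The second step is the key extraction: for each $i \in \mathrm{supp}(f)$, from the derivation of $(f, f_{\hat\imath}) \in Bend(I)$ one should produce a polynomial $h_i \in I_{\leq d}$ whose coefficient at the $i$-th monomial equals that of $f$, and whose coefficients at every other monomial are bounded above by those of $f$, with equalities wherever the chain of relations forces them. Finally, iteratively applying the monomial elimination axiom of Definition \ref{def: trop_ideals} to the collection $\{h_i\}_{i \in \mathrm{supp}(f)}$ matches all the coefficients of $f$ and exhibits $f$ itself as an element of $I_{\leq d} \subseteq I$.

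The main obstacle is the extraction step: translating a combinatorial derivation of $(f, f_{\hat\imath})$ in the bend congruence into a single witness $h_i \in I$ with precisely controlled coefficient behavior. The tropical ideal hypothesis, equivalently the valuated matroid structure on $I_{\leq d}$, is essential here; without it, $bend(f)$ can be contained in $Bend(I)$ without $f$ lying in $I$, as illustrated earlier in the paper by prime ideals that fail to be closed.
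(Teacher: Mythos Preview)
Your forward direction is fine and matches the paper. The converse, however, is a plan rather than a proof, and the plan does not close. You yourself flag the extraction step as ``the main obstacle'' and then leave it open: you assert that from a derivation of $(f,f_{\hat\imath})$ in $Bend(I)$ one \emph{should} produce an $h_i\in I_{\leq d}$ whose $i$-th coefficient matches $f$ and whose other coefficients are bounded by those of $f$, but you give no mechanism for doing so. A derivation in the congruence $Bend(I)$ may involve transitivity chains, sums of pairs, and twisted multiplications by arbitrary elements; there is no evident way to collapse such a chain to a single witness polynomial with the sharp coefficient control you require. Your ``normal form'' step has the same problem: it is not clear that a degree-$d$ relation in $Bend(I)$ can be derived using only bend relations of $I_{\leq d}$, since the congruence closure could in principle route through higher-degree elements. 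Incidentally, if you \emph{could} produce such $h_i$, step four would be trivial---the idempotent sum $\sum_i h_i$ would already equal $f$---so the monomial elimination axiom is not where the real work happens in your scheme.

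The paper's argument avoids all of this by working dually. It views each homogeneous $g$ of degree $d$ as a tropical linear form $l_g$ on the space of degree-$d$ monomials, sets $L_d=I_d^{\perp}$ to be the tropical linear space on which every $l_f$ with $f\in I_d$ tropically vanishes, and observes that $bend(g)\subseteq Bend(I)$ forces $l_g(z)=l_{g_{\hat\imath}}(z)$ for every $z\in L_d$, i.e.\ $g$ tropically vanishes on $L_d$. The tropical-ideal hypothesis enters exactly once, as the statement that $I_d$ is a tropical linear space and hence $I_d^{\perp\perp}=I_d$, which immediately gives $g\in I_d$. This duality argument replaces your unproven extraction step with a single application of biduality for valuated matroids; that is the missing idea in your proposal.
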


\begin{proof}
One direction is easy since $Bend(I)=\left< bend(f) : f\in I\right>.$ For the other, let $f \in I$ be a homogeneous polynomial, then $f$ corresponds to a linear form $l_f$ on the linear space of monomials of degree $d$. Denote by $L_d$ the tropical linear space on which $l_f$ tropically vanishes for all $f \in I_d$ and observe that $L_d = I_d^{\perp}$. Moreover, a polynomial $g$ tropically vanishes on $L_d$ if and only if $l_g(z) = l_{g_{\hat{\imath}}}(z)$, for all $i$ and $z \in L_d$. So if $g$ is such that $bend(g) \subseteq Bend(I)$ then $g \in L_d^{\perp} = I_d$.
\end{proof}

\begin{theorem}\label{thm: mainA}
If $I$ is a non-zero prime tropical ideal of $\tLaur$, then $\VV(I)$ is at most a point. 
\end{theorem}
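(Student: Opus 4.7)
The approach I propose is to use Proposition~\ref{prop: ideal-bend} to reduce the question about $\VV(I)$ to a previously established fact about varieties of prime congruences, namely Remark~\ref{rem: V(prime_cong)}. The plan is to first extend Proposition~\ref{prop: ideal-bend} from homogeneous tropical ideals in $\tPoly$ to arbitrary tropical ideals of $\tLaur$, and then to combine this closedness property with Proposition~\ref{prop: Iprime-bendPrime}.

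For the extension, I would argue that for any finite support set $S \subseteq \ZZ^n$, the subset $I_S$ of elements of $I$ whose support is contained in $S$ is a tropical linear space; this is essentially the monomial elimination axiom of Definition~\ref{def: trop_ideals}, reformulated with support bounds rather than degree bounds. The matroid-duality argument used in the proof of Proposition~\ref{prop: ideal-bend} then applies to each $I_S$, yielding that $f \in I$ if and only if $bend(f) \subseteq Bend(I)$; equivalently, $I = \polyFromBend{Bend(I)}$, so $I$ is closed.

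Given this, the proof concludes swiftly. Since $I$ is a non-zero prime ideal, Proposition~\ref{prop: Iprime-bendPrime} gives that $Bend(I)$ is a prime congruence of $\tLaur$; Remark~\ref{rem: V(prime_cong)} then ensures that $\VV(Bend(I))$ contains at most one point; and the identity $\VV(I) = \VV(Bend(I))$, immediate from the definitions of these sets, extends the bound to $\VV(I)$ itself. The main technical obstacle is the extension of Proposition~\ref{prop: ideal-bend} to the Laurent setting, but since matroid duality for tropical linear spaces goes through in any finite-support setting, the argument in the homogeneous case transfers with essentially no change — one simply replaces ``bounded degree'' with ``bounded support'' throughout.
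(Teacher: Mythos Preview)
Your final paragraph is a correct proof, but the first two paragraphs are unnecessary: the closedness of $I$ (i.e., the extension of Proposition~\ref{prop: ideal-bend} to $\tLaur$) is never invoked in your concluding argument. The chain
\[
I \text{ non-zero prime} \;\Longrightarrow\; Bend(I) \text{ prime (Prop.~\ref{prop: Iprime-bendPrime})} \;\Longrightarrow\; |\VV(Bend(I))| \leq 1 \text{ (Remark~\ref{rem: V(prime_cong)})} \;\Longrightarrow\; |\VV(I)| \leq 1
\]
uses only that $I$ is a non-zero prime ideal of $\tLaur$, not that it is tropical or closed; the identity $\VV(I)=\VV(Bend(I))$ is purely definitional. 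This is exactly the argument the paper already gives in Section~3.3 for the stronger statement about arbitrary non-zero prime ideals, so what you label ``the main technical obstacle'' is in fact no obstacle at all.

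That said, your route differs from the paper's proof of Theorem~\ref{thm: mainA}, which is deliberately presented in the Appendix as an \emph{alternative} proof avoiding the prime-congruence machinery. There the paper argues directly: assuming $\VV(I)$ contains two distinct points $P_1,P_2$, it picks $f\in I$ (with constant term $>1_\TT$) and a term $m$ with $m^{-1}(P_1)>f(P_1)$ and $m(P_2)>f(P_2)$, then sets $f_1=f+m$ and $f_2=f+m^{-1}$. One checks that $f_1,f_2\notin I$ (each fails to tropically vanish at one of $P_1,P_2$) while $f_1f_2=f(f+m+m^{-1})\in I$, contradicting primality. This construction is more hands-on and self-contained; your approach is shorter once Proposition~\ref{prop: Iprime-bendPrime} and Remark~\ref{rem: V(prime_cong)} are available, and it gives the stronger conclusion without the tropical hypothesis.
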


\begin{proof}
First we show that if $I$ is a tropical prime ideal, then $I = \II(\VV(I))$. By definition it is true that $\II(\VV(I)) \supseteq I$. For the other inclusion from the definition of the sets if $f \in \II(\VV(I))$ then $bend(f) \subseteq Rad (Bend (I))$. However, since $I$ is prime then so is $Bend (I)$ by Proposition~\ref{prop: Iprime-bendPrime}, so $Rad (Bend (I)) = Bend(I)$ implying that $f \in I$.

Now let $\VV(I)$ contain at least two distinct points, call these $P_1$ and $P_2$. We will show that $I$ is not prime. Consider $f \in I$, such that $f \neq 0_{\mathbb{T}}$ and so $\VV(f)$ contains $P_1$ and $P_2$. Such a polynomial exists since by assumption $I$ is non-zero and $|\VV(I)| \geq 2$. Moreover, without loss of generality we can assume that $f$ has a constant term greater than $1_{\mathbb{T}}$. %$f > 1$.

Take $f_1 = f + m$ and $f_2 = f + m^{-1}$, where $m$ is a term. We will show that there exists a choice for $m$ such that $P_i \not\in \VV(f_j), \text{ whenever } i\neq j $ for $i, j = 1, 2$, i.e., that $f_1, f_2 \not\in I$. We will also show that $f_1 f_2\in I$.

First, note that we can choose the term $m = t^cx_1^{u_1}x_n^{u_n}$ so that 
\begin{align*}
m^{-1}(P_1) &> f(P_1) , \text{ and } \\
m(P_2) &> f(P_2),
\end{align*}
because the value of the affine function $m(P) = c + (u_1, \dots, u_n).P$ can be freely set on $n+1$ points in dimension $n$. From our choice of $m$ we also have that
\begin{equation*}
\begin{split}
f_1(P_2) &= m (P_2) > f(P_2)\\
f_2(P_1) &= m^{-1} (P_1) > f(P_1),
\end{split}
\end{equation*}
implying that $P_2 \not\in \VV(f_1)$ and $P_1 \not\in \VV(f_2)$ and so $f_1$ and $f_2$ do not belong to $I$. 
Remains to see that $f_1 f_2\in I$. Indeed,
\begin{equation*}
\begin{split}
    f_1 f_2 &= (f + m)(f + m^{-1}) = f^2 + fm + f(m^{-1}) + 1 \\
            &= f^2 + fm + f(m^{-1}) = f (f + m + (m^{-1})) \in I,
\end{split}
\end{equation*}
where the third equality holds because addition is idempotent and $f$ has a constant term greater than $1_{\mathbb{T}}$.
\end{proof}

% \noindent The following example illustrates the above result.

% \begin{example} We give a simple example of a tropical ideal whose variety is a line, explicitly showing it is not prime. Consider the ideal $J = (x-y) \subseteq k[x,y]$, where $k$ is a trivially valued field. Let $I = trop(J) \subseteq \mb[x,y]$, which is a realizable tropical ideal. Notice that $(x+y+1)(x+y+xy)=(x+y)(x+y+xy+1) \in I$, but neither $(x+y+1)$ nor $(x+y+xy)$ are in $I$, hence $I$ is not prime. %\orange{kalina sanity check $f = 1 + y/x$, let $P_1 = (1, 1)$ and $P_2 = (2, 2)$. Then $f(P_1) = f(P_2) = 0$. $f_1 = f+ t^cx^ay^b$, $f_2 = f+ t^{-c}x^{-a}y^{-b}$. Since we want $f_1(P_2) > f(P_2)$ and $f_2(P_1) > f(P_1)$ this means $c+2a+2b > 0$ and $-c-a-b>0$, so $m = t^{-3}xy$}
% \end{example}

%\newpage
\bibliographystyle{alpha}

\end{document}